\theoremstyle{plain}
\newtheorem{theorem}{Theorem}
\newtheorem{lemma}{Lemma}
\begin{document}

\title{Minimax Efficient Finite-Difference Stochastic Gradient Estimators Using Black-Box Function Evaluations}
\author{Henry Lam\thanks{Department of Industrial Engineering and Operations Research, Columbia University, 500 W 120th St., New York, NY 10027, USA. Email: {\tt henry.lam@columbia.edu}}
\and
Haidong Li\thanks{Department of Industrial Engineering and Management, Peking University, No. 60 Yannan Yuan, Beijing, China. Email: {\tt haidong.li@pku.edu.cn}}
\and
Xuhui Zhang\thanks{Department of Management Science and Engineering, Stanford University, 475 Via Ortega, Stanford, CA 94305, USA. Email: {\tt xuhui.zhang@stanford.edu}}}
\date{}
\maketitle

\begin{abstract}

Standard approaches to stochastic gradient estimation, with only noisy black-box function evaluations, use the finite-difference method or its variants. While natural, it is open to our knowledge whether their statistical accuracy is the best possible. This paper argues so by showing that central finite-difference is a nearly minimax optimal zeroth-order gradient estimator for a suitable class of objective functions and mean squared risk, among both the class of linear estimators and the much larger class of all (nonlinear) estimators.

\end{abstract}

{\small
\noindent\textbf{Keywords: }stochastic gradient estimation, zeroth-order oracle, finite difference, minimax efficiency, Le Cam's method, modulus of continuity}


\section{Introduction}
Stochastic gradient estimation is of central importance in simulation analysis and optimization. It concerns the estimation of gradients under noisy environments driven by data or Monte Carlo simulation runs. This problem arises as a key ingredient in sensitivity analysis and uncertainty quantification for simulation models, descent-based algorithms in stochastic optimization and machine learning, and other applications such as financial portfolio hedging. For an overview of stochastic gradient estimation and its applications, see, e.g., \cite{l1991overview}, \cite{fu2006gradient}, \cite{glasserman2013monte} Chapter 7 and \cite{asmussen2007stochastic} Chapter 7.

In this paper, we consider stochastic gradient estimation when only a noisy simulation oracle to evaluate the function value or model output is available. This corresponds to black-box settings in which it is costly, or even impossible, to utilize the underlying dynamics of a simulation model, or to distort the data collection mechanism in an experiment given the input. In stochastic optimization, such an oracle is also known as the zeroth-order \cite{ghadimi2013stochastic,nesterov2017random}. These gradient estimators are in contrast to unbiased estimators obtained from methods such as the infinitesimal perturbation analysis \cite{heidelberger1988convergence,ho1983infinitesimal}, the likelihood ratio or score function method \cite{glynn1990likelihood,rubinstein1986score,reiman1989sensitivity}, measure-valued differentiation \cite{heidergott2010gradient} and other variants that require structural information on model dynamics.

Under the above setting, the most natural and common approach is to use the finite-difference (FD) method \cite{zazanis1993convergence,fox1989replication,frolov1963calculation}. This entails simulating the function values at two neighboring inputs and using the first principle of differentiation to approximate the derivative. The resulting estimator has a bias coming from this derivative approximation, on top of the variance coming from the function evaluation noise. This leads to a subcanonical overall mean squared error (MSE), i.e., with a decay rate slower than the order $O(n^{-1})$ exhibited by standard central limit theorems, where $n$ refers to the number of differencing pairs in the simulation. This also leads to a need to rightly tune the perturbation size between the two input values. It is widely known (e.g., \cite{l1991overview} Section 3.2) that, for twice continuously differentiable functions, the optimal attainable MSE for central finite-difference (CFD) schemes is of order $O(n^{-2/3})$. On the other hand, when one uses forward or backward finite-differences, the optimal MSE deteriorates to $O(n^{-1/2})$.

Although the optimal MSEs \emph{within} the classes of FD schemes are well-known, a question arises whether these classes are optimal or better compared to other, possibly much larger, classes of gradient estimators. Our goal in this paper is to give a first such study on the optimality on a class level.

Our main results show that, under a general setting, CFD is nearly optimal among any possible gradient estimation schemes. This optimality is in a minimax sense. Namely, we consider the MSE of any gradient estimator applied on a collection of twice differentiable functions satisfying a certain bound on their derivative behaviors, but otherwise with unknown function characteristics (including function value and higher-order gradients). Subject to this uncertainty of the function, we consider the minimizer of the worst-case MSE over this function collection, giving rise to what we call the minimax risk. Among the class of linear estimators, we show that, in the one-dimensional case, CFD exactly achieves the minimax risk, whereas in the multi-dimensional case it achieves so up to a multiplicative factor that depends sublinearly on the input dimension. Furthermore, we show that, among the much larger class of all nonlinear estimators, CFD remains nearly minimax optimal up to a multiplicative factor that is polynomial in the dimension.

In terms of methodological contributions, we derive our linear minimax results by using a new elementary proof. We derive our general minimax results via Le Cam's method \cite{tsybakov2009} with the imposition of an adversarially chosen hypothesis test, and the notion of modulus of continuity \cite{donoho1991geometrizing} to obtain the worst-case functions derived from this test. Lastly, we also demonstrate that, without extra knowledge on the gradient, randomized schemes such as simultaneous perturbation will lead to unbounded worst-case risks in general, due to the interaction between the gradient magnitude and the additional variance coming from the random perturbation. This indicates that less conservative analyses along this line would require more information on the magnitude of the gradient of interest.



Our work is related to, and contrasted with, the derivative estimation in nonparametric regression (e.g., \cite{fan1993local,fan1997local}), which focuses on the estimation of the conditional expectations and their derivatives given input values, a similar setting as ours. However, in these studies the data and in particular the available input values are often assumed given a priori. In contrast, in stochastic gradient estimation and optimization, one often has the capability to select the input points at which the function evaluation is conducted. This therefore endows more flexibility than nonparametric regression and, correspondingly, translates to superior minimax rates in our setting. For example, \cite{fan1997local} has established a minimax risk of order $O(n^{-4/7})$ for nonparametric derivative estimation, which is slower than our $O(n^{-2/3})$. Finally, we note that other works \cite{dai2016optimal,de2013derivative,wang2015derivative} have studied derivative estimation uniformly well over regular or equi-spaced input design points. Moreover, these papers consider asymptotic risks as the number of input points grow, in contrast to the finite-sample results in this work.


The remainder of the paper is as follows. Section \ref{s1} focuses on linear minimax risk and the corresponding optimal or nearly optimal estimators. Section \ref{sec:general} focuses on general risks and estimators. Section \ref{sec:conclusion} concludes the paper and discusses future directions.

\section{Linear Minimax Risk and Optimal Estimators}
\label{s1}
In this section we focus on the class of linear estimators. Section \ref{sec:single} first presents the single-dimensional case. Section \ref{s2} generalizes it to the higher-dimensional counterpart, and Section \ref{sec:randomized} further studies the use of simultaneous random perturbation in this setting. We will derive bounds on minimax risks and show that CFD is a nearly optimal estimator. Furthermore, these bounds are tight for any finite sample in the single-dimensional case, and also in the multi-dimensional case under additional restrictions.

\subsection{Single-Dimensional Case}\label{sec:single}
We first introduce our setting. Let $f(\cdot):\mathbb{R}\to\mathbb{R}$ be a performance measure of interest, where we have access to an unbiased estimate $Y(x)$ for any chosen $x\in\mathbb{R}$. In other words, $Y(\cdot)$ is a family of random variables indexed by $x$ such that $E[Y(x)]=f(x)$ and $Var(Y(x))=\sigma^{2}(x)$ for any $x\in\mathbb{R}$. Suppose $x_{0}$ is the point of interest. Our goal is to estimate the derivative $f^{'}(x_{0})$.

Given simulation budget $n\geq1$, we can simulate independently at input design points $x_{0}+\delta_{j},j=1,\cdots,n$, with $\delta_j$ of our choice, giving outputs $Y_j(x_0+\delta_j)$'s. We consider the class of linear estimators in the form
\begin{equation}
L_{n}=\sum_{j=1}^{n}w_{j}Y_j(x_{0}+\delta_{j}),\label{linear}
\end{equation}
where $w_{j}$ are the linear coefficients or weights. Note that for even budget $n$ the CFD scheme
\begin{equation}
\bar L_n=\frac{1}{n/2}\sum_{j=1}^{n/2}\frac{Y_{2j-1}(x_0+\delta)-Y_{2j}(x_0-\delta)}{2\delta}\label{CFD}
\end{equation}
is an example of linear estimators where $\delta_{j}=(-1)^{j+1}\delta$ and $w_{j}= \frac{(-1)^{j+1}}{n\delta}$, for a perturbation size $\delta$.

We aim to study the optimality within the class of all linear estimators in the form \eqref{linear}, and in particular investigate the role of CFD. We use the MSE as a performance criterion, which depends on the a priori unknown function $f$. Our premise is a minimax framework that seeks for schemes to minimize the worst-case MSE, among a class of function $f$ and simulation noise. More precisely, we consider
\begin{eqnarray}
\mathcal{A}&=&\Bigg\{f(\cdot):f^{(2)}(x_{0})\textrm{ exists and }{}\notag\\
&&{}\left|f(x)-f(x_{0})-f^{'}(x_{0})(x-x_{0})-\frac{f^{(2)}(x_{0})}{2}(x-x_{0})^{2}\right|\leq\frac{a}{6}|x-x_{0}|^{3}\Bigg\}\label{class}
\end{eqnarray}
and
\begin{equation}\label{classbsingle}
    \mathcal{B}=\{\sigma^{2}(\cdot):\sigma^{2}(x)\leq b\},
\end{equation}
where $a,b>0$ are assumed given.

Roughly speaking, the objective function class $\mathcal A$ contains all twice differentiable functions whose second-order Taylor-series expansion has a remainder with coefficient (i.e., the third-order derivative if exists) absolutely bounded by $a$. This characterization is not exact, however, since the Taylor-series expansion in \eqref{class} applies only to the point $x_0$, and thus $\mathcal A$ is more general than the aforementioned characterization. The reason for proposing this class, instead of other similar ones, is that $\mathcal A$ allows us to obtain a very accurate minimax analysis and derivation of optimal estimators. On the other hand, the variance function class $\mathcal B$ stipulates that the noise at any point has a variance with upper bound $b$.

We define the linear minimax $L_{2}$-risk as \begin{equation}\label{l2risklinear}
    R(n,\mathcal{A},\mathcal{B})=\inf_{\substack{\delta_{j},j=1,\cdots,n\\w_{j},j=1,\cdots,n}}\sup_{\substack{f(\cdot)\in\mathcal{A}\\\sigma^{2}(\cdot)\in\mathcal{B}}}E\left(L_{n}-f^{'}(x_{0})\right)^{2},
\end{equation}
which is the minimum worst-case MSE among all functions $f\in\mathcal A$ and noise levels in $\mathcal B$. The linear estimators are selected through the design points $x_0+\delta_j$'s and linear coefficients $w_j$'s.

The following theorem gives the exact expression for the minimax risk, and shows that a suitably calibrated CFD attains this risk level. In other words, CFD is the optimal linear estimator among the problem class specified by $(\mathcal A,\mathcal B)$. The proof of this result involves only elementary inequalities.

\begin{theorem}\label{st}
For any $n\geq1$, let the objective function class $\mathcal{A}$ be defined by \eqref{class} and the variance function class $\mathcal{B}$ be defined by \eqref{classbsingle}. Then the linear minimax $L_2$-risk defined by \eqref{l2risklinear} satisfies
$$R(n,\mathcal{A},\mathcal{B})\geq\left(\frac{3a^{2}b^{2}}{16}\right)^{1/3}n^{-2/3}.$$
Moreover, if the budget $n$ is even, the CFD estimator $\bar{L}_{n}$ in \eqref{CFD} with $\delta=(\frac{18b}{a^{2}})^{1/6}\frac{1}{n^{1/6}}$ satisfies
$$\sup_{\substack{f(\cdot)\in\mathcal{A}\\\sigma^{2}(\cdot)\in\mathcal{B}}}E(\bar{L}_{n}-f^{'}(x_{0}))^{2}=\left(\frac{3a^{2}b^{2}}{16}\right)^{1/3}n^{-2/3},$$
and is the best linear estimator in the class of problems defined by $(\mathcal{A},\mathcal{B})$.\hfill\qed
\end{theorem}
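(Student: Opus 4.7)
The plan is to prove matching upper and lower bounds of $(3a^2b^2/16)^{1/3}n^{-2/3}$, both via elementary decompositions of the mean squared error into squared bias plus variance.

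For the upper bound, I would Taylor-expand $f(x_0\pm\delta)=f(x_0)\pm f'(x_0)\delta+\tfrac{1}{2}f''(x_0)\delta^{2}+R_\pm$, with $|R_\pm|\leq a\delta^{3}/6$ directly from the defining inequality of $\mathcal{A}$. The symmetric differencing kills the constant and quadratic contributions, leaving $|E[\bar L_n]-f'(x_0)|=|R_+-R_-|/(2\delta)\leq a\delta^{2}/6$. The variance of an average of $n/2$ independent centered pairs is $\sigma^{2}/(n\delta^{2})\leq b/(n\delta^{2})$, so the worst-case MSE is at most $a^{2}\delta^{4}/36+b/(n\delta^{2})$. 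Minimizing over $\delta$ yields $\delta^{6}=18b/(a^{2}n)$ and the claimed value. Both suprema are attained simultaneously by taking, for instance, $f(x)=f'(x_0)(x-x_0)+\tfrac{a}{6}(x-x_0)^{3}$ and $\sigma^{2}(\cdot)\equiv b$, so the bound is tight.

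For the lower bound, I would fix any linear estimator $L_n=\sum_j w_j Y_j(x_0+\delta_j)$ and Taylor-expand to get
\[
E[L_n]-f'(x_0)=f(x_0)\sum_j w_j+f'(x_0)\bigl(\sum_j w_j\delta_j-1\bigr)+\tfrac{1}{2}f''(x_0)\sum_j w_j\delta_j^{2}+\sum_j w_j r_f(\delta_j),
\]
where the remainder satisfies $|r_f(\delta_j)|\leq a|\delta_j|^{3}/6$. Because $f(x_0),f'(x_0),f''(x_0)$ can be shifted independently within $\mathcal{A}$ (by adding a constant, linear, or quadratic term), finiteness of the worst-case bias forces the moment conditions $\sum_j w_j=0$, $\sum_j w_j\delta_j=1$, $\sum_j w_j\delta_j^{2}=0$. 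Under these constraints, the worst-case squared bias equals $\bigl(\tfrac{a}{6}\sum_j |w_j||\delta_j|^{3}\bigr)^{2}$, realized by an $f$ whose Taylor remainder equals $\mathrm{sgn}(w_j)\tfrac{a}{6}|\delta_j|^{3}$ at each $\delta_j$ and vanishes in a neighborhood of $x_0$, while the worst-case variance is $b\sum_j w_j^{2}$, realized by $\sigma^{2}\equiv b$.

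The key quantitative step is the factorization $|w_j\delta_j|=(|w_j||\delta_j|^{3})^{1/3}(w_j^{2})^{1/3}\cdot 1^{1/3}$ combined with three-term H\"older, yielding
\[
1=\bigl|\sum_j w_j\delta_j\bigr|\leq n^{1/3}\bigl(\sum_j |w_j||\delta_j|^{3}\bigr)^{1/3}\bigl(\sum_j w_j^{2}\bigr)^{1/3},
\]
so that $\bigl(\sum_j|w_j||\delta_j|^{3}\bigr)\bigl(\sum_j w_j^{2}\bigr)\geq 1/n$. Writing $B=\tfrac{a}{6}\sum_j|w_j||\delta_j|^{3}$ and $V=b\sum_j w_j^{2}$, the weighted AM-GM $B^{2}+V=B^{2}+V/2+V/2\geq 3(B^{2}V^{2}/4)^{1/3}$ together with $BV\geq ab/(6n)$ delivers the lower bound $(3a^{2}b^{2}/16)^{1/3}n^{-2/3}$. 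The main obstacle I anticipate is verifying that the supremum bias $\tfrac{a}{6}\sum|w_j||\delta_j|^{3}$ is genuinely attained within $\mathcal{A}$: one must exhibit a twice-differentiable $f$ whose Taylor remainder at $x_0$ obeys the cubic pointwise bound globally while realizing prescribed signs at the $n$ design points, which I would handle by a bump-type construction supported away from $x_0$ so that $f^{(2)}(x_0)$ trivially exists.
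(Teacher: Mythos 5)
Your proposal is essentially the paper's own proof: the same bias--variance decomposition, the same reduction to the moment conditions via constant/linear perturbations, the same adversarial sign-realizing function for the bias, and the same quantitative core (your three-term H\"older with exponents $(3,3,3)$ is exactly the paper's two-step H\"older-plus-Cauchy--Schwarz, and your AM--GM closes the optimization that the paper does by calculus, with matching constant $(3/16)^{1/3}$). Your extra condition $\sum_j w_j\delta_j^2=0$ is harmless but unnecessary, since the adversarial $f$ you construct already has $f^{(2)}(x_0)=0$.

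The one loose end is not the one you flagged (the bump construction is fine, and even a function that is nonzero only at the finitely many design points lies in $\mathcal{A}$, since the class only requires $f^{(2)}(x_0)$ to exist), but rather the case of \emph{coinciding} design points $\delta_i=\delta_j$ carrying opposite-sign weights: there a single function value cannot realize $\mathrm{sgn}(w_i)$ and $\mathrm{sgn}(w_j)$ simultaneously, so the worst-case bias is $\frac{a}{6}\sum_{\delta}\bigl|\sum_{j:\delta_j=\delta}w_j\bigr||\delta|^3$ rather than $\frac{a}{6}\sum_j|w_j||\delta_j|^3$, and your lower bound as written would overstate it. The paper dispatches this by observing that one may assume w.l.o.g.\ equal weights at coinciding points (averaging preserves all the moment sums and the merged bias while only decreasing $\sum_j w_j^2$), after which the signs agree and your argument goes through verbatim; you should add that one line.
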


\begin{proof}
Consider any designs $\delta_{j},j=1,\cdots,n$, any linear coefficients $w_{j},j=1,\cdots,n$, and any function $f(\cdot)\in \mathcal{A}$. If  $f(\cdot)\in C^{3}(\mathbb{R})$, we have, by Taylor's expansion,
$$f(x_{0}+\delta_{j})=f(x_{0})+f^{'}(x_{0})\delta_{j}+\frac{f^{(2)}(x_{0})}{2}\delta_{j}^{2}+\frac{f^{(3)}(x_{0}+t_{j}\delta_{j})}{6}\delta_{j}^{3},$$
for any $j=1,\cdots,n$, where $0\leq t_{j}\leq 1$. Thus under the additional $C^3$ assumption (which we will verify if necessary) the bias of the estimator $L_{n}$ is
\begin{equation}\label{biaslinearc3}
    EL_{n}-f^{'}(x_{0})=f(x_{0})\sum_{j=1}^{n}w_{j}+f^{'}(x_{0})\left(\sum_{j=1}^{n}w_{j}\delta_{j}-1\right)+\frac{f^{(2)}(x_{0})}{2}\sum_{j=1}^{n}w_{j}\delta_{j}^{2}+\sum_{j=1}^{n}\frac{f^{(3)}(x_{0}+t_{j}\delta_{j})}{6}w_{j}\delta_{j}^{3}.
\end{equation}
On the other hand, the variance of the estimator $L_{n}$ is
$$Var(L_{n})=\sum_{j=1}^{n}w_{j}^{2}\sigma^{2}(x_{0}+\delta_{j}),$$
which holds regardless of whether $f(\cdot)\in C^3(\mathbb R)$. If $\sum_{j=1}^{n}w_{j}\neq0$, we consider the particular case where $f(\cdot)$ is a constant function, i.e., $f(x)=f(x_0)$ for all $x\in\mathbb{R}$. Note that constant functions are in $\mathcal{A}$ and also in $C^3(\mathbb{R})$. Also note that $f^{'}(x)=f^{(2)}(x)=f^{(3)}(x)=0$ for all $x$, therefore from \eqref{biaslinearc3} we conclude that
$$\sup_{f(\cdot)\in \mathcal{A}}(EL_{n}-f^{'}(x_{0}))^{2}\geq\sup_{f(x_0)\in\mathbb{R}}f(x_0)^2(\sum_{j=1}^nw_j)^2=\infty,$$
and hence $\sup_{f(\cdot)\in \mathcal{A}}E(L_{n}-f^{'}(x_{0}))^{2}=\sup_{f(\cdot)\in \mathcal{A}}(EL_{n}-f^{'}(x_{0}))^{2}+Var(L_n)=\infty$. Therefore, for the purpose of deriving a lower bound for $R(n,\mathcal{A},\mathcal{B})$, we can assume without loss of generality that $\sum_{j=1}^{n}w_{j}=0$. Similarly we can assume $\sum_{j=1}^{n}w_{j}\delta_{j}-1=0$ (by considering functions $f(\cdot)\in\mathcal{A}$ that are linear, and using the expression \eqref{biaslinearc3}). Furthermore, if $\delta_{i}=\delta_{j}$, we assume w.l.o.g. $w_{i}=w_{j}$ since it leads to smaller $Var(L_n)$. Now consider $f(\cdot)\in\mathcal{A}$ such that $f(x_{0}+\delta_{j})=\frac{a}{6}|\delta_{j}|^{3}\cdot sign(w_{j})$, and $f(x)=0$ otherwise. In such a case the MSE simplifies to
$$\left(\sum_{j=1}^{n}\frac{a}{6}|w_{j}\delta_{j}^{3}|\right)^{2}+\sum_{j=1}^{n}w_{j}^{2}\sigma^{2}(x_{0}+\delta_{j}).$$
Further considering the case $\sigma^{2}(x_{0}+\delta_{j})=b$, and using the bias-variance decomposition of MSE, we get a lower bound
\begin{equation}\label{lowboundlinear}
    \sup_{\substack{f(\cdot)\in\mathcal{A}\\\sigma^{2}(\cdot)\in\mathcal{B}}}E(L_{n}-f^{'}(x_{0}))^{2}\geq\frac{a^{2}}{36}\left(\sum_{j=1}^{n}|w_{j}\delta_{j}^{3}|\right)^{2}+b\sum_{j=1}^{n}w_{j}^{2}.
\end{equation}
Now since $\sum_{j=1}^{n}w_{j}\delta_{j}=1$, by H\"older's inequality,
$$1\leq\sum_{j=1}^{n}|w_{j}|^{1/3}|\delta_{j}||w_{j}|^{2/3}\leq\left(\sum_{j=1}^{n}|w_{j}\delta_{j}^{3}|\right)^{1/3}\left(\sum_{j=1}^{n}|w_{j}|\right)^{2/3}$$
and so
\begin{equation}\label{lowboundpart}
\left(\sum_{j=1}^{n}|w_{j}\delta_{j}^{3}|\right)^{2}\geq\frac{1}{\left(\sum_{j=1}^{n}|w_{j}|\right)^{4}}\geq\frac{1}{n^{2}\left(\sum_{j=1}^{n}w_{j}^{2}\right)^{2}},
\end{equation}
where we use the Cauchy-Schwarz inequality $\left(\sum_{j=1}^n|w_j|\right)^2\leq n\left(\sum_{j=1}^nw_j^2\right)$. Thus, combining \eqref{lowboundlinear} and \eqref{lowboundpart},
\begin{equation}\label{lowboundafterholder}
    \sup_{\substack{f(\cdot)\in\mathcal{A}\\\sigma^{2}(\cdot)\in\mathcal{B}}}E\left(L_{n}-f^{'}(x_{0})\right)^{2}\geq\frac{a^{2}}{36}\frac{1}{n^{2}\left(\sum_{j=1}^{n}w_{j}^{2}\right)^{2}}+b\sum_{j=1}^{n}w_{j}^{2}.
\end{equation}
Treat the right hand side of \eqref{lowboundafterholder} as a function of $\sum_{j=1}^nw_j^2$. Note that by the first-order optimality condition, we have
\begin{equation}\label{minlowbound}
\min_{H>0}\frac{a^{2}}{36}\frac{1}{n^{2}H^{2}}+b H=\left(\frac{3a^{2}b^{2}}{16}\right)^{1/3}n^{-2/3},
\end{equation}
where the minimum is achieved at
$$H=\left(\frac{a^{2}}{b}\right)^{1/3}\frac{1}{18^{1/3}n^{2/3}}.$$
Since $\delta_{j},w_{j}$ are arbitrary, from \eqref{lowboundafterholder} and \eqref{minlowbound} we conclude that
\begin{equation}\label{lb}
\inf_{\substack{\delta_{j},j=1,\cdots,n\\w_{j},j=1,\cdots,n}}\sup_{\substack{f(\cdot)\in\mathcal{A}\\\sigma^{2}(\cdot)\in\mathcal{B}}}E\left(L_{n}-f^{'}(x_{0})\right)^{2}\geq\left(\frac{3a^{2}b^{2}}{16}\right)^{1/3}n^{-2/3}.
\end{equation}
On the other hand, supposing the budget $n$ is even, we see that the bias of the estimator $\bar{L}_{n}$ satisfies
\begin{eqnarray*}
&&|E\bar{L}_{n}-f^{'}(x_{0})|=\left|\frac{f(x_{0}+\delta)-f(x_{0}-\delta)}{2\delta}-f^{'}(x_{0})\right|=\left|\frac{f(x_{0}+\delta)-f(x_{0}-\delta)-2\delta f^{'}(x_{0})}{2\delta}\right|\\
&&=\frac{\left|\left(f(x_{0}+\delta)-f(x_{0})-f^{'}(x_{0})\delta-\frac{f^{(2)}(x_{0})}{2}\delta^{2}\right)-\left(f(x_{0}-\delta)-f(x_{0})+f^{'}(x_{0})\delta-\frac{f^{(2)}(x_{0})}{2}\delta^{2}\right)\right|}{2\delta}\\
&&\leq \frac{\left|f(x_{0}+\delta)-f(x_{0})-f^{'}(x_{0})\delta-\frac{f^{(2)}(x_{0})}{2}\delta^{2}\right|+\left|f(x_{0}-\delta)-f(x_{0})+f^{'}(x_{0})\delta-\frac{f^{(2)}(x_{0})}{2}\delta^{2} \right|}{2\delta}\\
&&\leq \frac{a}{6}\delta^{2}.
\end{eqnarray*}
Also, the variance satisfies $Var(\bar{L}_{n})\leq \frac{b}{n\delta^{2}}$. Thus the MSE satisfies
\begin{equation}\label{ub}
E\left(\bar{L}_{n}-f^{'}(x_{0})\right)^{2}\leq\frac{a^{2}}{36}\delta^{4}+\frac{b}{n\delta^{2}}=\left(\frac{3a^{2}b^{2}}{16}\right)^{1/3}n^{-2/3}
\end{equation}
by plugging in $\delta=(\frac{18b}{a^{2}})^{1/6}\frac{1}{n^{1/6}}$. We note that the bound (\ref{ub}) holds for any $f(\cdot)\in\mathcal{A}$ and $\sigma^{2}(\cdot)\in\mathcal{B}$. Combining (\ref{lb}) and (\ref{ub}), we conclude the proof for Theorem~\ref{st}.
\end{proof}

\subsection{Multi-Dimensional Case}
\label{s2}
We now generalize to the multi-dimensional case. Consider a performance measure with multi-dimensional input, $f(\cdot):\mathbb{R}^{p}\to\mathbb{R}$ where $p\geq2$. Analogous to the single-dimensional case, suppose $Y(\cdot)$ is an unbiased estimate of $f(\cdot)$. We would like to estimate $\nabla f(x_{0})$ where $x_{0}\in\mathbb{R}^{p}$ is the point of interest. Consider
\begin{eqnarray}
\mathcal{A}_{q}&=&\Bigg\{f(\cdot):\nabla^{2} f(x_{0})\textrm{ exists and }{}\notag\\
&&{}\left|f(x)-f(x_{0})-\nabla f(x_{0})^{T}(x-x_{0})-\frac{1}{2}(x-x_{0})^{T}\nabla^{2}f(x_{0})(x-x_{0})\right|\leq \frac{a}{6}\|x-x_{0}\|^{3}_{q}\Bigg\}\label{classmulti}
\end{eqnarray}
and
\begin{equation}\label{classbmulti}
\mathcal{B}=\{\sigma^{2}(\cdot):\sigma^{2}(x)\leq b\},
\end{equation}
where $a,b>0$, and $\|\cdot\|_q$ denotes the $\ell_q$-norm, $q\in\{1,2,\infty\}$.
Given simulation budget $n\geq1$, we simulate independently at design points $x_{0}+\delta_{j},j=1,\cdots,n$, and form the vector-valued linear estimator
$$L^{p}_{n}=\sum_{j=1}^{n}w_{j}Y_j(x_{0}+\delta_{j}),$$
where $w_{j}\in\mathbb R^p$ are the vector-valued linear coefficients. Correspondingly, suppose $n$ is a multiple of $2p$, and we divide the budget into $n/p$ for each dimension $i$ where we compute
\begin{equation}
(\bar L_{n}^{p})_{i}=\frac{1}{n/(2p)}\sum_{j=1}^{n/(2p)}\frac{Y_{2j-1}(x_0+\delta e_i)-Y_{2j}(x_0-\delta e_i)}{2\delta}\label{CFD multi}
\end{equation}
with $e_i$ denoting the $i$th standard basis in $\mathbb R^p$. This leads to  $\bar L_n^p=[(\bar L_{n}^p)_{i}]_{i=1,\ldots,p}$ which is the CFD estimator in $\mathbb R^p$ that is the multivariate analog to \eqref{CFD}.

Like before, we define the linear minimax $L_{2}$-risk as \begin{equation}\label{riskmulti}
R_{p}(n,\mathcal{A}_{q},\mathcal{B})=\inf_{\substack{\delta_{j},j=1,\cdots,n\\w_{j},j=1,\cdots,n}}\sup_{\substack{f(\cdot)\in\mathcal{A}_q\\\sigma^{2}(\cdot)\in\mathcal{B}}}E\|L^{p}_{n}-\nabla f(x_{0})\|^{2}_{2}.
\end{equation}
Intuitively, the main distinction of the multi-dimensional setting described above,  compared to the single-dimensional case, is the restriction on the remainder of the second-order Taylor-series expansion characterized by the $\ell_q$-norm, where the choice of $q$ can affect the resulting risk bounds. The following theorem provides a lower estimate of $R_p$ and shows that applying CFD on each of the $p$ dimensions matches this lower estimate up to a multiplicative factor depending sublinearly on $p$. This implies in particular that multi-dimensional CFD is rate-optimal in the sample size $n$. Moreover, if the signs of the weights $w_j$ in $L_n^p$ are further restricted to be the same across components, then CFD achieves the exact minimax risk when $q=1$ or $2$.

\begin{theorem}\label{mt}
For any $n\geq1$, let the objective function class $\mathcal{A}_q$ be defined by \eqref{classmulti} and the variance function class $\mathcal{B}$ be defined by \eqref{classbmulti}. Then:
\begin{enumerate}
\item The linear minimax $L_2$-risk defined by \eqref{riskmulti} satisfies
\begin{equation}\label{mlbt}
R_{p}(n,\mathcal{A}_{q},\mathcal{B})\geq p^{4/3}\left(\frac{3a^{2}b^{2}}{16}\right)^{1/3}n^{-2/3},\text{\ for\ } q=1,2;
\end{equation}
\begin{equation}\label{mlbt2}
R_{p}(n,\mathcal{A}_{q},\mathcal{B})\geq p^{2/3}\left(\frac{3a^{2}b^{2}}{16}\right)^{1/3}n^{-2/3},\text{\ for\ } q=\infty.
\end{equation}
\item If the budget $n$ is a multiple of $2p$, we allocate $n/p$ budget and form the CFD estimator $\bar L_n^p$ in \eqref{CFD multi} with $\delta=(\frac{18b}{a^{2}})^{1/6}\frac{1}{(n/p)^{1/6}}$ on each dimension. Then we have
\begin{equation}\label{me}
\sup_{\substack{f(\cdot)\in\mathcal{A}_q\\\sigma^{2}(\cdot)\in\mathcal{B}}}E\|\bar{L}^{p}_{n}-\nabla f(x_{0})\|^{2}_{2}\leq  p^{5/3}\left(\frac{3a^{2}b^{2}}{16}\right)^{1/3}n^{-2/3},
\end{equation}
and the estimator $\bar{L}_{n}^{p}$ is optimal in the class of problems defined by $(\mathcal{A}_{q},\mathcal{B})$ up to a sublinear multiplicative factor in $p$.
\item If in $L_n^p$ we further restrict each coefficient $w_{j}$ to have the same sign across components, i.e. $sign((w_{j})_{k})=sign((w_{j})_{l})$ for any $1\leq k,l\leq p$, then we have
$$R_{p}(n,\mathcal{A}_{q},\mathcal{B})\geq p^{5/3}\left(\frac{3a^{2}b^{2}}{16}\right)^{1/3}n^{-2/3},\text{\ for\ } q=1,2;$$
$$R_{p}(n,\mathcal{A}_{q},\mathcal{B})\geq p\left(\frac{3a^{2}b^{2}}{16}\right)^{1/3}n^{-2/3},\text{\ for\ } q=\infty$$
for this restricted class of linear estimators, so that $\bar L_n^p$ is exactly optimal when considering function class $\mathcal{A}_{q}$ with $q=1,2$.\end{enumerate}\hfill\qed
\end{theorem}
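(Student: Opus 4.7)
The plan is to prove the theorem by extending Theorem 1's univariate argument to the multivariate setting, handling the three claims in turn. The upper bound (part 2) is the most direct: I would apply Theorem 1 coordinate-wise, since $\bar L_n^p$ decomposes into $p$ independent 1D CFDs each using $n/p$ samples along the direction $e_i$, and the restriction of any $f\in\mathcal{A}_q$ to the line $x_0+te_i$ lies in the 1D class $\mathcal{A}$ with the same constant $a$ (because $\|e_i\|_q=1$ for every $q\in\{1,2,\infty\}$). This yields per-coordinate MSE at most $p^{2/3}(3a^2b^2/16)^{1/3}n^{-2/3}$, and summing across the $p$ coordinates gives the stated $p^{5/3}$ factor.

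For the lower bounds in parts 1 and 3, I follow the 1D blueprint. First I argue that finite worst-case MSE forces the moment conditions $\sum_j(w_j)_i=0$, $\sum_j(w_j)_i(\delta_j)_k=\mathbf{1}\{i=k\}$ for all $i,k$, and $\sum_j(w_j)_i\delta_j\delta_j^T=0$; the last is a genuinely multivariate addition to the 1D constraints, needed because $\nabla^2 f(x_0)$ is unconstrained in $\mathcal{A}_q$ and would otherwise drive the bias to infinity. Under these constraints, I construct adversaries $f$ supported on $\{x_0+\delta_j\}$ with $|f(x_0+\delta_j)|\le(a/6)\|\delta_j\|_q^3$, which trivially have $\nabla f(x_0)=\nabla^2 f(x_0)=0$; the total squared bias then equals $\|Wc\|_2^2$ with $c\in\R^n$, $|c_j|\le(a/6)\|\delta_j\|_q^3$, and the variance contribution is $b\sum_j\|w_j\|_2^2=:bB$. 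For $q=\infty$, I pick the coordinate $i^*$ minimizing $y_{i^*}:=\sum_j(w_j)_{i^*}^2\le B/p$ and deploy the 1D coordinate adversary $c_j=(a/6)|(\delta_j)_{i^*}|^3\,\mathrm{sign}((w_j)_{i^*})$; the 1D H\"older chain from Theorem 1 then gives squared bias $\ge(a/6)^2 p^2/(n^2B^2)$, and optimizing against $bB$ yields the $p^{2/3}n^{-2/3}$ rate. For $q=1,2$, the larger adversary budget $\|\delta_j\|_q^3\ge|(\delta_j)_{i^*}|^3$ together with the trace identity $\sum_j\langle w_j,\delta_j\rangle=\mathrm{tr}(W\Delta)=p$ and a three-term H\"older chain of the form $p^3\le(\sum_j v_j)^2\sum_j v_j\|\delta_j\|_q^3$ (with $v_j=\|w_j\|_2$) produces the additional $p^{2/3}$ improvement, giving the claimed $p^{4/3}$ rate.

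For the restricted case (part 3), the sign alignment lets me write $w_j=\alpha_j\tilde w_j$ with $\tilde w_j\ge 0$ componentwise; the aligned adversary $c_j=(a/6)\|\delta_j\|_q^3\alpha_j$ produces same-sign bias contributions $(a/6)\sum_j\tilde w_{j,i}\|\delta_j\|_q^3\ge 0$ across all coordinates, so they aggregate constructively rather than cancelling. For $q=\infty$, a per-coordinate 1D H\"older bound combined with the power-mean step $\sum_i 1/y_i^2\ge p^3/B^2$ delivers the $p$ rate. For $q=1,2$, substituting $u=\mathbf{1}/\sqrt p$ into the dual identity $\max_c\|Wc\|_2=\max_{\|u\|_2=1}\sum_j(a/6)\|\delta_j\|_q^3|\langle u,w_j\rangle|$ and using $|\langle u,w_j\rangle|=\|\tilde w_j\|_1/\sqrt p\ge v_j/\sqrt p$ (where the sign alignment is decisively exploited, since $\|\tilde w_j\|_1\ge\|\tilde w_j\|_2$ only because $\tilde w_j\ge 0$) yields the sharper $p^{5/3}$ rate, matching the upper bound up to a constant. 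The main obstacle throughout is pinning down the exact polynomial factor in $p$; the unrestricted $q=1,2$ case is the most delicate, since a naive per-coordinate attack only yields $p^{2/3}$ and the extra $p^{2/3}$ must come from a careful H\"older-plus-trace combination that substitutes for the constructive sign alignment available only in the restricted regime.
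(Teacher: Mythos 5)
Your proposal is correct and follows essentially the same route as the paper: coordinate-wise application of Theorem~1 for the upper bound, and for the lower bounds the same point-mass adversary supported on the design points (single-coordinate in Part~1, sign-aligned in Part~3) combined with the trace identity $\sum_j w_j^T\delta_j=p$ and the three-term H\"older chain, reproducing the paper's exponents exactly. The only nit is in Part~3: the inequality $\|\tilde w_j\|_1\ge\|\tilde w_j\|_2$ holds for all vectors, so the sign alignment is really used in the identity $\langle\mathbbm{1},\tilde w_j\rangle=\|\tilde w_j\|_1$ and in letting a single adversary make all $p$ coordinate biases nonnegative simultaneously, not in that norm comparison.
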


Theorem \ref{mt} is proved in a similar manner as the single-dimensional case in Theorem \ref{st}, but with some specific differences due to different choices of the norm function. The detailed proof of Theorem \ref{mt} is provided in Appendix \ref{missing}. We remark that the $p^{1/3}$ gap between~(\ref{mlbt}) and~(\ref{me}) is due to a technical challenge that the $l_{\infty}\to l_{2}$ matrix norm does not admit an explicit expression. This challenge is bypassed if one restricts the same sign across all components in each coefficient, which recovers the minimax optimality of CFD.

\subsection{Randomized Design}\label{sec:randomized}
The discussion above has focused on deterministic designs in which the perturbation sizes $\delta_j$ are fixed. In multi-dimensional gradient estimation, it is also common to use random perturbation in which a random vector in $\mathbb R^p$ is generated and FD is taken simultaneously for all dimensions by projecting the vector onto each direction. This leads to schemes such as simultaneous perturbation  \cite{spall1992multivariate} and Gaussian smoothing \cite{nesterov2017random}, and are frequently used in stochastic optimization. A question arises how these randomized schemes perform with respect to our presented risk criterion.


To proceed, let $\delta=(\delta^i)_{i=1,\ldots,p}$ be a random vector in $\mathbb{R}^{p}$ where $\delta^{i},i=1,\cdots,p$ are i.i.d. symmetrically distributed about $0$ and satisfy some additional properties (described in, e.g., \cite{spall1992multivariate}, which will not concern us as we will see), and let $\phi(\delta)=(1/\delta^{1},\cdots,1/\delta^{p})^{T}$. Other distributional choices of $\delta$ and the associated $\phi$ have also  been suggested (e.g., \cite{nesterov2017random,flaxman2005online}), for which our subsequent argument follows similarly. Suppose the simulation budget $n$ is even. We choose a scaling parameter $h>0$, then repeatedly and independently simulate $\delta_j\in\mathbb R^p$ and $Y_j(\cdot)$'s and output
\begin{equation}\label{simulperturb}
S_n=\frac{1}{n/2}\sum_{j=1}^{n/2}\frac{Y_{2j-1}(x_{0}+h\delta_{j})-Y_{2j}(x_{0}-h\delta_{j})}{2h}\phi(\delta_j).
\end{equation}
The following theorem shows that, without further assumptions on the magnitude of the first-order gradient, the $L_{2}$-risk of random perturbation can be arbitrarily large.
\begin{theorem}
For any $n\geq2$ even, let the objective function class $\mathcal{A}_q$ be defined by \eqref{classmulti} and the variance function class $\mathcal{B}$ be defined by \eqref{classbmulti}. Then the estimator defined by \eqref{simulperturb} satisfies
$$\sup_{\substack{f(\cdot)\in\mathcal{A}_q\\\sigma^{2}(\cdot)\in\mathcal{B}}}E\|S_{n}-\nabla f(x_{0})\|^{2}_{2}=\infty.$$\hfill\qed\label{SP thm}
\end{theorem}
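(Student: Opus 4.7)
The plan is to exhibit a specific one-parameter family of worst cases $(f,\sigma^{2})\in\mathcal{A}_{q}\times\mathcal{B}$ on which the MSE of $S_{n}$ diverges. The underlying observation is that $\mathcal{A}_{q}$ only bounds the third-order Taylor remainder at $x_{0}$ and places no constraint whatsoever on the magnitude of $g:=\nabla f(x_{0})$, whereas the randomized scheme $\phi(\delta_{j})=(1/\delta_{j}^{1},\ldots,1/\delta_{j}^{p})^{T}$ mixes every coordinate of $g$ into every component of $S_{n}$ through the ratios $\delta_{j}^{k}/\delta_{j}^{i}$ with $k\neq i$. This causes the estimator variance to pick up a factor proportional to $\|g\|_{2}^{2}$, so pushing $\|g\|_{2}\to\infty$ along the test family will force the supremum to be infinite.

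Concretely, I would restrict attention to linear perturbations $f(x)=g^{T}(x-x_{0})$ with $g\in\mathbb{R}^{p}$ arbitrary, and to the noiseless choice $\sigma^{2}\equiv 0$. Each such $f$ lies in $\mathcal{A}_{q}$ for every $q\in\{1,2,\infty\}$ because the second-order Taylor remainder vanishes identically, and $\sigma^{2}\equiv 0\in\mathcal{B}$ trivially. Under this choice $Y_{2j-1}(x_{0}+h\delta_{j})-Y_{2j}(x_{0}-h\delta_{j})=2h\,g^{T}\delta_{j}$ deterministically, so
\begin{equation*}
S_{n}=\frac{1}{n/2}\sum_{j=1}^{n/2}(g^{T}\delta_{j})\,\phi(\delta_{j}),\qquad (S_{n})_{i}=\frac{1}{n/2}\sum_{j=1}^{n/2}\Bigl(g^{i}+\sum_{k\neq i}g^{k}\frac{\delta_{j}^{k}}{\delta_{j}^{i}}\Bigr).
\end{equation*}

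Next I would compute the mean and variance of one summand. Conditioning on $\delta_{j}^{i}$ and using $E[\delta_{j}^{k}]=0$ (symmetry) together with the independence of coordinates, each summand has mean $g^{i}$, so $ES_{n}=g$ and the MSE reduces to $\sum_{i}\mathrm{Var}((S_{n})_{i})$. For the second moment, expanding $(g^{T}\delta/\delta^{i})^{2}$ and using independence to cancel cross terms yields
\begin{equation*}
E\!\left[\left(\frac{g^{T}\delta}{\delta^{i}}\right)^{\!2}\right]=(g^{i})^{2}+E\!\left[\frac{1}{(\delta^{i})^{2}}\right]\,E[(\delta^{i})^{2}]\!\!\sum_{k\neq i}(g^{k})^{2},
\end{equation*}
where I have used the i.i.d.\ assumption $E[(\delta^{k})^{2}]=E[(\delta^{i})^{2}]$. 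Subtracting $(g^{i})^{2}$, invoking the Cauchy--Schwarz bound $E[(\delta^{i})^{2}]\cdot E[1/(\delta^{i})^{2}]\geq 1$, and summing over $i$ then delivers
\begin{equation*}
E\|S_{n}-g\|_{2}^{2}=\sum_{i=1}^{p}\mathrm{Var}((S_{n})_{i})\geq \frac{2(p-1)}{n}\,\|g\|_{2}^{2},
\end{equation*}
which tends to $\infty$ as $\|g\|_{2}\to\infty$ while $f(x)=g^{T}(x-x_{0})$ stays in $\mathcal{A}_{q}$.

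The only bookkeeping subtlety, and really the main obstacle, is that $E[1/(\delta^{i})^{2}]$ may equal $+\infty$ (as happens for Gaussian smoothing). In that case the second-moment expansion immediately gives $E[(g^{T}\delta/\delta^{i})^{2}]=+\infty$ for any $g$ with some $g^{k}\neq 0$, $k\neq i$, so the risk is already infinite at a fixed $g$ without needing to send $\|g\|_{2}\to\infty$; the proof thus splits cleanly into the finite- and infinite-$E[1/(\delta^{i})^{2}]$ cases, with the bound above covering the former. Notably, nothing delicate about the smoothing parameter $h$ or a bias--variance tradeoff enters, because the pathology is driven entirely by the interplay between the unconstrained gradient magnitude in $\mathcal{A}_{q}$ and the randomization.
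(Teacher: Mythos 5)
Your proposal is correct and follows essentially the same route as the paper: both arguments take a linear function in $\mathcal{A}_q$ (the paper uses $f(x)=f(x_0)+\rho\mathbbm{1}^T(x-x_0)$, you use a general $g^T(x-x_0)$), observe that the random perturbation makes the estimator variance scale with the squared gradient magnitude, and let that magnitude tend to infinity. Your version is slightly more quantitative --- the explicit bound $E\|S_n-g\|_2^2\geq \tfrac{2(p-1)}{n}\|g\|_2^2$ via Cauchy--Schwarz, plus the careful split on whether $E[1/(\delta^i)^2]$ is finite --- whereas the paper only needs the qualitative fact that $\mathrm{Var}(\mathbbm{1}^T\delta/\delta^i)>0$, but the underlying mechanism is identical.
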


\begin{proof}
First note that by independence,
{\small
$$E\|S_{n}-ES_{n}\|^{2}_{2}=\frac{2}{n}tr\left(Cov\left(\frac{Y(x_{0}+h\delta)-Y(x_{0}-h\delta)}{2h}\phi(\delta)\right)\right)=\frac{2}{n}\sum_{i=1}^{p}Var\left(\frac{Y(x_{0}+h\delta)-Y(x_{0}-h\delta)}{2h\delta^{i}}\right).$$}
Next, by conditioning on $\delta$, we have
{\small
\begin{align*}
Var\left(\frac{Y(x_{0}+h\delta)-Y(x_{0}-h\delta)}{2h\delta^{i}}\right)&=Var\left(\frac{f(x_{0}+h\delta)-f(x_{0}-h\delta)}{2h\delta^{i}}\right)+E\left[\frac{\sigma^{2}(x_{0}+h\delta)+\sigma^{2}(x_{0}-h\delta)}{4h^{2}(\delta^{i})^{2}}\right]\\
&\geq Var\left(\frac{f(x_{0}+h\delta)-f(x_{0}-h\delta)}{2h\delta^{i}}\right).
\end{align*}}

Now consider the function $f(x)=f(x_{0})+\rho \mathbbm{1}^{T}(x-x_{0})$ in $\mathcal{A}_{q}$, where $\rho\in\mathbb{R}$ and $\mathbbm{1}\in\mathbb{R}^{p}$ denotes the vector of all ones. Thus we get
$$Var\left(\frac{f(x_{0}+h\delta)-f(x_{0}-h\delta)}{2h\delta^{i}}\right)=\rho^{2}Var\left(\frac{\mathbbm{1}^{T}\delta}{\delta^{i}}\right)>0.$$
Sending $\rho\to\infty$ we conclude that
$$\sup_{f(\cdot)\in\mathcal{A}_{q}}Var\left(\frac{f(x_{0}+h\delta)-f(x_{0}-h\delta)}{2h\delta^{i}}\right)=\infty.$$
Finally note the bias-variance decomposition
{\small
$$E\|S_{n}-\nabla f(x_{0})\|^{2}_{2}=\|ES_{n}-\nabla  f(x_{0})\|^{2}_{2}+E\|S_{n}-ES_{n}\|_{2}^{2}\geq E\|S_{n}-ES_{n}\|_{2}^{2}\geq\frac{2}{n}\sum_{i=1}^{p}Var\left(\frac{f(x_{0}+h\delta)-f(x_{0}-h\delta)}{2h\delta^{i}}\right),$$
}which completes our proof.
\end{proof}

The unboundedness of the worst-case $L_2$-risk in Theorem \ref{SP thm} is due to the interaction between the gradient of interest and the variance from the random perturbation. This hints that in general, to restrain the worst-case $L_2$-risk for such schemes, extra knowledge on the magnitude of the gradient is needed.

\section{General Minimax Risk}\label{sec:general}
We now expand our analysis to consider estimators that are possibly nonlinear. Section \ref{sec:single general} first presents the single-dimensional case. Section \ref{sec:multi general} then presents the generalization to the multi-dimensional counterpart. We derive bounds for the minimax risks and show that, in these expanded classes, the CFD estimators are still nearly optimal.

\subsection{Single-Dimensional Case}\label{sec:single general}
Adopting the notations from the previous sections, suppose the budget is $n$. We select the input design points $x_{1},\cdots,x_{n}$ and for convenience let $Y_{j}=Y_j(x_{j})$ be the independent unbiased noisy function evaluation of $f$ at $x_{j}$ with simulation variance $\sigma^{2}(x_{j})$. We are interested in estimating $f^{'}$ at $x_0$. Denote $\widehat{\theta}=\widehat{\theta}(Y_{1},\cdots,Y_{n})$ as a generic estimator. Like in Section \ref{sec:single}, we consider the class of problems specified by the objective function class $\mathcal A$ defined in \eqref{class} and the variance function class $\mathcal B$ defined in \eqref{classbsingle}.
Now we define the minimax $L_2$-risk as
\begin{equation}\label{generalrisk}
R(n,\mathcal{A},\mathcal{B})=\inf_{\substack{x_{j},j=1,\cdots,n\\\widehat{\theta}}}\sup_{\substack{f(\cdot)\in\mathcal{A}\\\sigma^{2}(\cdot)\in\mathcal{B}}}E(\widehat{\theta}-f^{'}(x_0))^{2}.
\end{equation}
Note that here there is no restriction on how the estimator $\widehat\theta$ depends on $Y_1,\ldots,Y_n$. 
\begin{theorem}
For any $n\geq1$, let the objective function class $\mathcal{A}$ be defined by \eqref{class} and the variance function class $\mathcal{B}$ be defined by \eqref{classbsingle}. Then the minimax $L_2$-risk defined by \eqref{generalrisk} satisfies
\begin{equation}
R(n,\mathcal{A},\mathcal{B})\geq \frac{1}{16}e^{-2/3}\left(\frac{3ab}{n}\right)^{2/3}.\label{risk bound}
\end{equation}
Consequently, the CFD estimator $\bar L_n$ in \eqref{CFD} is optimal up to a constant multiplicative factor.\hfill\qed\label{general thm}
\end{theorem}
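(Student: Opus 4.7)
The plan is to establish the lower bound via Le Cam's two-point method, with the adversarial hypothesis pair drawn from $\mathcal{A}$ and identified through the modulus of continuity of the functional $f \mapsto f'(x_0)$ relative to the design-induced seminorm.

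\textbf{Reduction and Le Cam.} Since the constant-variance noise $\sigma^{2}(x)\equiv b$ lies in $\mathcal{B}$, the supremum in \eqref{generalrisk} dominates the worst case under Gaussian observations $Y_j\sim \mathcal{N}(f(x_j),b)$, and restricting to this case yields a valid lower bound. For any pair $f_0,f_1\in\mathcal{A}$ and any deterministic design $x_1,\ldots,x_n$, the joint KL divergence is $\mathrm{KL}(P_0\|P_1)=(2b)^{-1}\sum_{j=1}^n (f_0(x_j)-f_1(x_j))^2$. Reducing squared-error estimation of $f'(x_0)$ to binary hypothesis testing and applying the Bretagnolle--Huber inequality $1-\mathrm{TV}(P_0,P_1)\geq \tfrac12 e^{-\mathrm{KL}(P_0\|P_1)}$ yields the working inequality
$$\inf_{\hat\theta}\sup_{f\in\{f_0,f_1\}} E\bigl(\hat\theta - f'(x_0)\bigr)^2 \;\geq\; \frac{(f_1'(x_0)-f_0'(x_0))^2}{16}\,e^{-\mathrm{KL}(P_0\|P_1)}.$$

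\textbf{Adversarial bump pair.} Choose $f_1:=\phi$ and $f_0:=-\phi$ with a bandwidth $h>0$ to be optimized, where
$$\phi(x) := \tfrac{ah^2}{6}\,(x-x_0)\bigl(1-((x-x_0)/h)^2\bigr)_+.$$
A direct differentiation gives $\phi(x_0)=0$, $\phi'(x_0)=ah^2/6$, $\phi''(x_0)=0$, and the Taylor-remainder condition $|\phi(x)-(ah^2/6)(x-x_0)|\leq (a/6)|x-x_0|^3$ holds with equality on $[x_0-h,x_0+h]$ and trivially off-support (where $\phi\equiv 0$). Hence $\phi,-\phi\in\mathcal{A}$. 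Because $\phi$ is compactly supported with $\|\phi\|_\infty=ah^3/(9\sqrt 3)$ by elementary calculus, we obtain the design-free KL bound
$$\mathrm{KL}(P_0\|P_1) \;=\; \frac{2}{b}\sum_{j=1}^n \phi(x_j)^2 \;\leq\; \frac{2n\|\phi\|_\infty^2}{b} \;=\; \frac{2na^2h^6}{243\,b}.$$

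\textbf{Optimization.} Substituting $(f_1'(x_0)-f_0'(x_0))^2=(ah^2/3)^2=a^2h^4/9$ into the Le Cam inequality gives
$$R(n,\mathcal{A},\mathcal{B}) \;\geq\; \frac{a^2 h^4}{144}\exp\!\left(-\frac{2na^2h^6}{243\,b}\right).$$
Setting $u:=2na^2h^6/(243b)$ puts the right-hand side in the form $C\cdot u^{2/3}e^{-u}(ab/n)^{2/3}$ with an explicit constant; maximizing $u\mapsto u^{2/3}e^{-u}$ (elementary: optimum at $u=2/3$, value $(2/3)^{2/3}e^{-2/3}$) and collecting the factors produces exactly $\tfrac{1}{16}e^{-2/3}(3ab/n)^{2/3}$, since $(243/2)^{2/3}(2/3)^{2/3}/144 = 81^{2/3}/144 = 9\cdot 3^{2/3}/144 = 3^{2/3}/16$.

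\textbf{Main obstacle.} The technical core is the bump construction: $\phi$ must simultaneously (i) lie in $\mathcal{A}$ with the largest possible $\phi'(x_0)$ relative to its amplitude, and (ii) vanish outside a short interval so that $\sum_j \phi(x_j)^2$ is controlled uniformly in the design. The kernel $(1-u^2)_+$ is the right choice because it saturates the cubic Taylor-remainder constraint exactly on its support while being identically zero outside, making the $\phi'(x_0)$-versus-$\|\phi\|_\infty$ trade-off extremal in the modulus-of-continuity sense. This sharpness is what delivers the \emph{exact} constant $3^{2/3}/16$ in the final bound rather than merely the $n^{-2/3}$ rate; all remaining steps are routine manipulations of the Le Cam--Bretagnolle--Huber inputs.
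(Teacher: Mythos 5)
Your proposal is correct and follows essentially the same route as the paper: Le Cam's two-point method with a pair $(\phi,-\phi)$, the KL bound $\mathrm{KL}\le \tfrac{2n}{b}\|\phi\|_\infty^2$, and the bound $\tfrac{\Delta^2}{16}e^{-\mathrm{KL}}$ from the Neyman--Pearson/Bretagnolle--Huber step; indeed your bump $\phi(x)=\tfrac{ah^2}{6}(x-x_0)\bigl(1-((x-x_0)/h)^2\bigr)_+$ is exactly the paper's extremal function $f^{*}(x)=\mathrm{sign}(x)\bigl[\tfrac{\epsilon}{2}|x|-\tfrac{a}{6}|x|^{3}\bigr]_{+}$ under the substitution $\epsilon=ah^{2}/3$. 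The only (immaterial) difference is that you posit this pair and verify membership in $\mathcal{A}$ directly, whereas the paper derives it as the solution of the inverse modulus-of-continuity problem via Lemma 7 of Donoho--Liu; for the lower bound your shortcut suffices and yields the identical constant.
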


The last conclusion in Theorem \ref{general thm} is a simple consequence from combining the risk estimate of $\bar L_n$ in Theorem \ref{st} with \eqref{risk bound}. In contrast to the elementary proof for Theorem \ref{st}, here we use Le Cam's method (e.g., \cite{tsybakov2009}) and the notion of modulus of continuity  \cite{donoho1991geometrizing} to estimate the minimax risk. Specifically, Le Cam's method derives minimax lower bounds by constructing a hypothesis test and using its error to inform a bound. The error of the hypothesis test, in turn, is analyzable by the Neyman-Pearson lemma. The lower bound provided by Le Cam's method involves the distance between two functions, and tightening the lower bound then becomes a functional optimization problem that can be viewed as the dual or inverse of the formulation to attain the so-called modulus of continuity. Consequently, finding the extremal or worst-case functions for the inverse modulus of continuity will give the resulting lower bound.

\begin{proof}
For convenience, we set $x_0=0$ w.l.o.g. in this proof, so that
\begin{equation}\label{classgeneral}
\mathcal{A}=\left\{f(\cdot):f^{(2)}(0)\textrm{ exists and } \left|f(x)-f(0)-f^{'}(0)x-\frac{f^{(2)}(0)}{2}x^{2}\right|\leq\frac{a}{6}|x|^{3}\right\}
\end{equation}
We use Le Cam's method. Consider arbitrary functions $f_{1},f_{2}\in\mathcal{A}$. For any estimator $\widehat{\theta}$, we have
\begin{equation}\label{generalineq1}
(f_{1}^{'}(0)-f_{2}^{'}(0))^{2}\leq 2 (\widehat{\theta}-f_{1}^{'}(0))^{2}+2(\widehat{\theta}-f_{2}^{'}(0))^{2}.
\end{equation}
For convenience denote the quantity $|f^{'}_1(0)-f^{'}_2(0)|$ as $\epsilon$.
Define test statistic $\psi$ by
\begin{equation}\label{generalteststat}
\psi(Y_{1},\cdots,Y_{n}) = \left\{ \begin{array}{ll}
1 & \textrm{if } |\widehat{\theta}-f_{2}^{'}(0)|\leq|\widehat{\theta}-f_{1}^{'}(0)|,\\
2 & \textrm{if } |\widehat{\theta}-f_{2}^{'}(0)|>|\widehat{\theta}-f_{1}^{'}(0)|.
\end{array} \right.
\end{equation}
Let $E_{k},k=1,2$ denote the expectation (and $P_{k}$ and $p_{k}$ as the probability measure and density) under model
$$Y_{j}\sim f_{k}(x_{j})+\eta_{j},j=1,\cdots,n,$$
where $\eta_{j},j=1,\cdots,n$ i.i.d. follows a normal distribution with mean zero and variance $b$. We have
$$E_{k}(\widehat{\theta}-f_{k}^{'}(0))^{2}\geq E_{k}\left[(\widehat{\theta}-f_{k}^{'}(0))^{2}I(\psi=k)\right]\geq\frac{\epsilon^{2}}{4}P_{k}(\psi=k),$$
for $k=1,2$, where the second inequality comes from \eqref{generalineq1} and \eqref{generalteststat}.
Thus
$$\sup_{\substack{f(\cdot)\in\mathcal{A}\\\sigma^{2}(\cdot)\in\mathcal{B}}}E(\widehat{\theta}-f^{'}(0))^{2}\geq\max_{k=1,2}E_{k}(\widehat{\theta}-f_{k}^{'}(0))^{2}\geq \frac{\epsilon^{2}}{4}\frac{P_{1}(\psi=1)+P_{2}(\psi=2)}{2}.$$
Taking infimum over all possible estimators, we get
$$\inf_{\widehat{\theta}}\sup_{\substack{f(\cdot)\in\mathcal{A}\\\sigma^{2}(\cdot)\in\mathcal{B}}}E(\widehat{\theta}-f^{'}(0))^{2}\geq\frac{\epsilon^{2}}{8}\inf_{\psi}(P_{1}(\psi=1)+P_{2}(\psi=2)).$$
The right hand side is minimized by the Neyman-Pearson test, i.e.,
\begin{displaymath}
\psi_{0}(y_{1},\cdots,y_{n})= \left\{ \begin{array}{ll}
1 & \textrm{if }p_{2}(y_{1},\cdots,y_{n})\geq p_{1}(y_{1},\cdots,y_{n}),\\
2& \textrm{if }p_{2}(y_{1},\cdots,y_{n})<p_{1}(y_{1},\cdots,y_{n}).
\end{array} \right.
\end{displaymath}
Thus by Lemma 2.6 in \cite{tsybakov2009}
(which we include in Appendix~\ref{appendlemma} for self-contained purpose),
\begin{equation}\label{stdrelation}
\inf_{\widehat{\theta}}\sup_{\substack{f(\cdot)\in\mathcal{A}\\\sigma^{2}(\cdot)\in\mathcal{B}}}E(\widehat{\theta}-f^{'}(0))^{2}\geq\frac{\epsilon^{2}}{8}\int \min\{p_{1}(y_{1},\cdots,y_{n}),p_{2}(y_{1},\cdots,y_{n})\}dy\geq\frac{\epsilon^{2}}{16}e^{-KL(P_{1},P_{2})},
\end{equation}
where $KL(P_{1},P_{2})$ denote the KL divergence between the distributions $P_{1},P_{2}$. Now since $P_{1}\sim\mathcal{N}(\mu_{1},bI_{n\times n})$ and $P_{2}\sim\mathcal{N}(\mu_{2},bI_{n\times n})$, where $\mu_{k}=(f_{k}(x_{1}),\cdots,f_{k}(x_{n})),k=1,2$, by direct computation,
\begin{equation}\label{klexpr}
KL(P_{1},P_{2})=\frac{1}{2b}(\mu_{2}-\mu_{1})^{T}(\mu_{2}-\mu_{1})=\frac{1}{2b}\|\mu_{2}-\mu_{1}\|_{2}^{2}\leq\frac{n}{2b}\underset{x}{\sup}|f_{1}(x)-f_{2}(x)|^2.
\end{equation}

To this end, we consider a constrained functional optimization problem for each $\epsilon$:
\begin{equation}
\omega(\epsilon)=\inf\left\{\underset{x}{\sup}|f_{1}(x)-f_{2}(x)|:|f'_{1}(0)-f'_{2}(0)|=\epsilon,f_{1},f_{2}\in\mathcal{A}\right\}.\label{modulus new}
\end{equation}
Solving the optimization in \eqref{modulus new} yields a tightest upper bound for $KL(P_{1},P_{2})$, and further obtains a tightest lower bound for $\frac{\epsilon^{2}}{16}e^{-KL(P_{1},P_{2})}$. The decision variables in the outer problem in \eqref{modulus new} are a pair of functions $(f_{1},f_{2})$, where we would later denote $(f_{1}^{*},f_{2}^{*})$ as the solutions, which constitute the extremal or worst-case functions. Note that $\omega(\epsilon)$ is the inverse function of the so-called modulus of continuity at the point $x_{0}=0$, which is defined by $$\epsilon(\omega)=\sup\left\{|f'_{1}(0)-f'_{2}(0)|:\underset{x}{\sup}|f_{1}(x)-f_{2}(x)|\leq\omega,f_{1},f_{2}\in\mathcal{A}\right\}.$$
By Lemma 7 of~\cite{donoho1991geometrizing} (which we include in Appendix~\ref{appendlemma} for self-contained purpose), the extremal pair of functions in attaining the modulus function $\epsilon(\omega)$ can be chosen in the form $f_{1}^*=f$ and $f_{2}^*=-f$ for some $f$. Thus
\begin{gather}
\omega(\epsilon)=2\inf\left\{\underset{x}{\sup}|f(x)|:|f'(0)|=\epsilon/2,f\in\mathcal{A}\right\}.\label{single_modulus}
\end{gather}
If $f(x)$ solves problem~(\ref{single_modulus}), so does $-f(-x)$. As the absolute value is a convex function, $(f(x)-f(-x))/2$ is then also a solution. Therefore, we can restrict attention to odd functions in our search for a solution to~(\ref{single_modulus}). Note that from the definition of $\epsilon$, we have $|f'(0)|=\epsilon/2$.

For each odd function $f\in\mathcal{A}$,
$$\left|f(x)-f^{'}(0)x\right|=\left|f(x)-f(0)-f^{'}(0)x-\frac{f^{(2)}(0)}{2}x^{2}\right|\leq\frac{a}{6}|x|^{3}.$$
It follows that
$$\left|f(x)\right|\geq \left|f^{'}(0)x\right|-\left|f(x)-f^{'}(0)x\right|\geq\frac{\epsilon}{2}|x|-\frac{a}{6}|x|^{3}.$$
Consider the function $f^{*}$ which increases with a gradient $\epsilon/2$ at $x_{0}=0$ and is as close to 0 as possible:
\begin{gather*}
f^{*}(x)=sign(x)\left[\frac{\epsilon}{2}|x|-\frac{a}{6}|x|^{3}\right]_{+}.
\end{gather*}
It is easy to verify that $f^{*}(x)$ is an odd function, $f^{*}(x)\in\mathcal{A}$, and $\underset{x}{\sup}|f(x)|\geq\underset{x}{\sup}|f^{*}(x)|$ for any odd function $f\in\mathcal{A}$. Therefore, $f^{*}(x)$ is a solution to problem~(\ref{single_modulus}). It is now easy to compute that $\omega(\epsilon)=2\underset{x}{\sup}|f^{*}(x)| =\frac{2\epsilon}{3}\sqrt{\frac{\epsilon}{a}}$.


Since $\omega(\epsilon)=\frac{2\epsilon}{3}\sqrt{\frac{\epsilon}{a}}$, we get from \eqref{stdrelation} and \eqref{klexpr} that
$$\inf_{\widehat{\theta}}\sup_{\substack{f(\cdot)\in\mathcal{A}\\\sigma^{2}(\cdot)\in\mathcal{B}}}E(\widehat{\theta}-f^{'}(0))^{2}\geq\frac{\epsilon^{2}}{16}e^{-\frac{2n\epsilon^{3}}{9ab}}.$$
Now take $\epsilon=\left(\frac{3ab}{n}\right)^{1/3}$ at which $\frac{\epsilon^{2}}{16}e^{-\frac{2n\epsilon^{3}}{9ab}}$ achieves its minimum, we get
$$\inf_{\widehat{\theta}}\sup_{\substack{f(\cdot)\in\mathcal{A}\\\sigma^{2}(\cdot)\in\mathcal{B}}}E(\widehat{\theta}-f^{'}(0))^{2}\geq\frac{1}{16}e^{-2/3}\left(\frac{3ab}{n}\right)^{2/3}.$$
We complete our proof by noting that the above bound holds for any design points $x_{1},\cdots,x_{n}$.
\end{proof}

Figure~\ref{single_wcf} visualizes the above “worst-case” function \begin{gather*}
f^{*}(x)=sign(x)\left[\frac{\epsilon}{2}|x|-\frac{a}{6}|x|^{3}\right]_{+}
\end{gather*}
in function class $\mathcal{A}$. Intuitively, when function evaluations of $f_{1}$ and $f_{2}$ are close but their gradients at $x_{0}=0$ are quite different, we cannot easily find an estimator to minimize the errors in gradient estimation for these two functions at the same time. Such a scenario is considered as the worst case. Consider $(f_{1},f_{2})$ taken in the form $(f,-f)$, and the difference between $f^{'}_{1}(0)$ and $f^{'}_{2}(0)$ is $\epsilon$. As shown in Figure~\ref{single_wcf}, $f^{*}$ increases with a gradient $\epsilon/2$ at $x_{0}=0$. In order to make function evaluations of $f_{1}$ and $f_{2}$ as close as possible, $f^{*}$ needs to be as close to 0 as possible. Ultimately, with these worst-case functions indexed by $\epsilon$, we take $\epsilon=\left(\frac{3ab}{n}\right)^{1/3}$ in our lower-bound derivation to balance the maximization of gradient difference $\epsilon$ and the minimization of function evaluation difference.
\begin{figure}[htbp]
	\centering
	\includegraphics[width=0.5\linewidth]{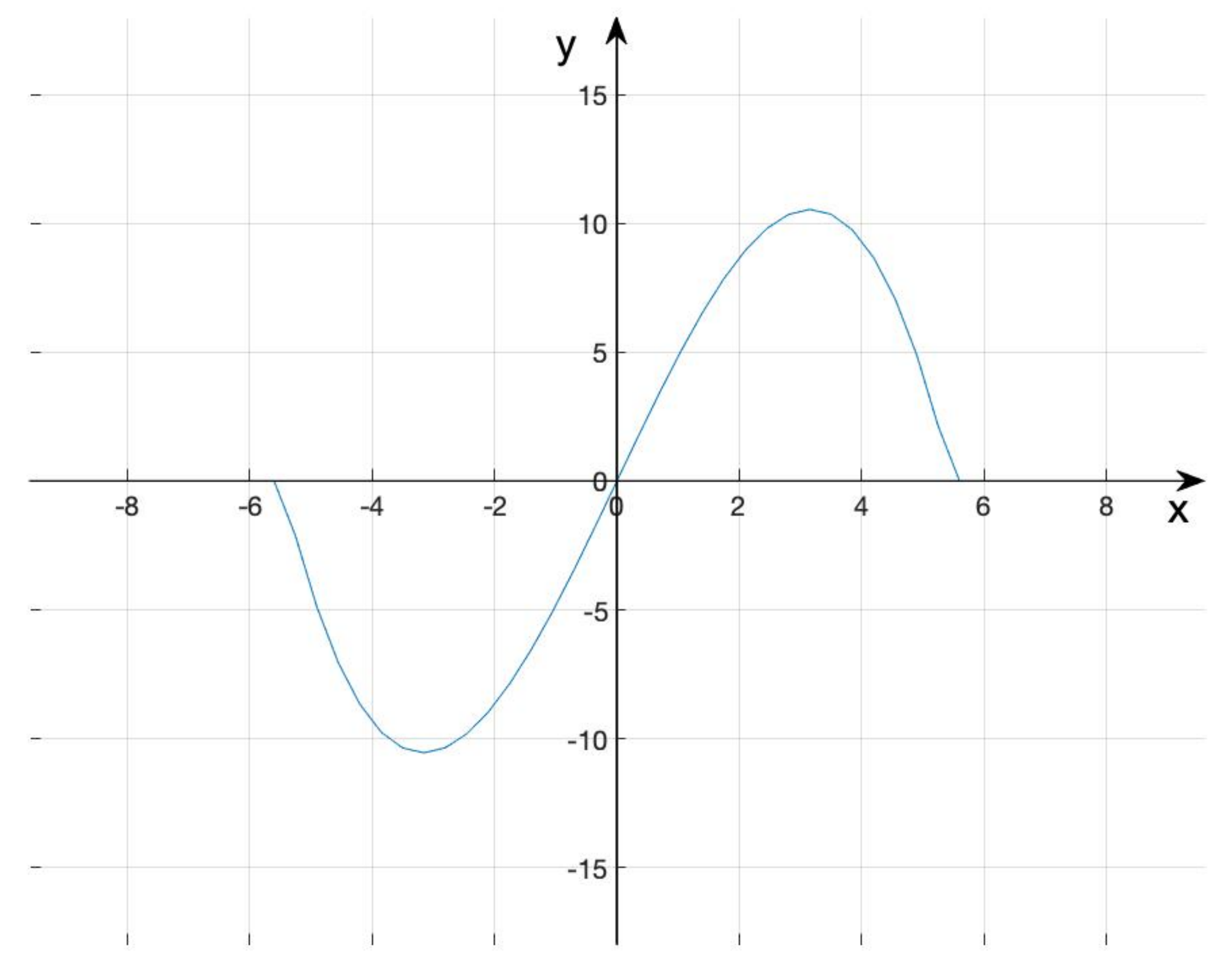}
	\caption{Worst-case function in attaining the inverse modulus of continuity}
    \label{single_wcf}
\end{figure}

 \subsection{Multi-Dimensional Case}\label{sec:multi general}
Suppose now that the input design points are $x_{1},\cdots,x_{n}\in\mathbb{R}^{p}$, and $Y_{j}=Y_j(x_{j})$ are independent unbiased noisy function evaluations of $f:\mathbb{R}^{p}\to\mathbb{R}$ at $x_{j}$ with simulation variance $\sigma^{2}(x_{j})$. We would like to estimate $\nabla f$ at $x_{0}$. Denote $\widehat{\theta}=\widehat{\theta}(Y_{1},\cdots,Y_{n})$ as a generic $\mathbb{R}^{p}$-valued estimator like before. We consider the class of problems specified by the objective function class $\mathcal A_q$ defined in \eqref{classmulti} and the variance function class $\mathcal B$ defined in \eqref{classbmulti},
where $q\in\{1,2,\infty\}$. Define the minimax $L_2$-risk as
\begin{equation}\label{riskgeneralmulti}
R_{p}(n,\mathcal{A}_{q},\mathcal{B})=\underset{\substack{x_{j},j=1,\ldots,n\\\widehat{\theta}}}{\inf}\ \underset{\substack{f(\cdot)\in\mathcal{A}_q\\\sigma^{2}(\cdot)\in\mathcal{B}}}{\sup}\ E\left\|\widehat{\theta}-\nabla f(x_0)\right\|_{2}^{2}.
\end{equation}

\begin{theorem}\label{general multi thm}
For any $n\geq 1$, let the objective function class $\mathcal{A}_q$ be defined by \eqref{classmulti} and the variance function class $\mathcal{B}$ be defined by \eqref{classbmulti}. Then the minimax $L_2$-risk defined by \eqref{riskgeneralmulti} satisfies
\begin{gather*}
R_{p}(n,\mathcal{A}_{q},\mathcal{B})\geq \frac{1}{16}e^{-2/3}\left(\dfrac{3abp^{3/2}}{n}\right)^{2/3}, \text{\ for\ } q=1,\\
R_{p}(n,\mathcal{A}_{q},\mathcal{B})\geq \frac{1}{16}e^{-2/3}\left(\dfrac{3ab}{n}\right)^{2/3}, \text{\ for\ } q=2,\infty.
\end{gather*}
Consequently, the CFD estimator that divides budget equally among all dimensions, $\bar L_n^p$ in~\eqref{CFD multi}, is optimal up to a multiplicative factor polynomial in $p$ for $q=1,2,\infty$.\hfill\qed
\end{theorem}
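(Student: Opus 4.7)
The plan is to extend the Le Cam two-point argument of Theorem \ref{general thm} to the multi-dimensional setting, the one new ingredient being a direction-aware construction of the worst-case function. Set $x_0 = 0$ without loss of generality. Each step of the single-dimensional proof carries through after replacing the scalar inequality $(f_1'(0) - f_2'(0))^2 \leq 2(\widehat\theta - f_1'(0))^2 + 2(\widehat\theta - f_2'(0))^2$ by its vector analogue $\|\nabla f_1(0) - \nabla f_2(0)\|_2^2 \leq 2\|\widehat\theta - \nabla f_1(0)\|_2^2 + 2\|\widehat\theta - \nabla f_2(0)\|_2^2$; the Neyman--Pearson reduction via Lemma 2.6 of \cite{tsybakov2009} together with the Gaussian noise choice $Y_j \sim \mathcal{N}(f_k(x_j), b)$ with $\sigma^2 \equiv b \in \mathcal{B}$ then yields
\begin{equation*}
R_p(n, \mathcal{A}_q, \mathcal{B}) \geq \frac{\epsilon^2}{16} \exp\left(-\frac{n}{2b} \sup_x |f_1(x) - f_2(x)|^2\right), \qquad \epsilon := \|\nabla f_1(0) - \nabla f_2(0)\|_2,
\end{equation*}
for any $f_1, f_2 \in \mathcal{A}_q$. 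Lemma 7 of \cite{donoho1991geometrizing} applies to $\mathcal{A}_q$ (since $f \in \mathcal{A}_q$ implies $-f(-\cdot) \in \mathcal{A}_q$), so I may restrict to the antisymmetric pair $(f, -f)$ with $f$ odd and $\|\nabla f(0)\|_2 = \epsilon/2$.

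Next I exhibit such an $f$ in ridge form $f(x) = h^{*}(u^T x)$, where $u \in \mathbb{R}^p$ is a direction to be tuned and $h^{*}(t) = \mathrm{sign}(t)\bigl[\tfrac{\epsilon'}{2}|t| - \tfrac{a'}{6}|t|^3\bigr]_+$ is the single-dimensional extremal from the proof of Theorem \ref{general thm}, with scalar parameters $\epsilon', a'$ to be rescaled. Since $\nabla^2 f(0) = h^{*{\prime\prime}}(0)\, uu^T = 0$, the multi-dimensional Taylor residual collapses to the scalar bound $|h^{*}(u^T x) - \tfrac{\epsilon'}{2} u^T x| \leq \tfrac{a'}{6}|u^T x|^3$. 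Combining with H\"older's inequality $|u^T x| \leq \|u\|_{q^{*}}\|x\|_q$, where $1/q + 1/q^{*} = 1$, shows $f \in \mathcal{A}_q$ as soon as $a' \|u\|_{q^{*}}^3 = a$. A direct computation then gives $\|\nabla f(0)\|_2 = (\epsilon'/2)\|u\|_2$ and $\sup_x |f(x)| = (\epsilon'/3)\sqrt{\epsilon'/a'}$, so putting $\epsilon' = \epsilon/\|u\|_2$ yields
\begin{equation*}
\sup_x |f_1(x) - f_2(x)| = 2\sup_x |f(x)| \leq \frac{2\epsilon}{3}\sqrt{\frac{\epsilon}{a}}\left(\frac{\|u\|_{q^{*}}}{\|u\|_2}\right)^{3/2}.
\end{equation*}

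To finish, I minimize $\|u\|_{q^{*}}/\|u\|_2$ over nonzero $u$: the minimum equals $1/\sqrt{p}$ at $u = \mathbf{1}$ for $q = 1$ (since $\|u\|_\infty \geq \|u\|_2/\sqrt{p}$), equals $1$ for $q = 2$, and equals $1$ at $u = e_i$ for $q = \infty$ (since $\|u\|_1 \geq \|u\|_2$). Inserting these into the Le Cam inequality produces an exponent of $-2n\epsilon^3/(9ab\, p^{3/2})$ for $q = 1$ and $-2n\epsilon^3/(9ab)$ for $q \in \{2, \infty\}$. Optimizing $\epsilon$ exactly as in Theorem \ref{general thm}, i.e., setting $\epsilon = (3ab\, p^{3/2}/n)^{1/3}$ and $\epsilon = (3ab/n)^{1/3}$ respectively, delivers the two stated lower bounds, and the matching upper order comes from Theorem \ref{mt} applied to $\bar L_n^p$. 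The main obstacle is recognizing the ridge-function ansatz and isolating the norm ratio $\|u\|_{q^{*}}/\|u\|_2$ as the sole carrier of the dimension dependence; once this is in hand, the rest is a direct adaptation of the single-dimensional argument.
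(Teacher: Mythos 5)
Your proposal is correct and follows essentially the same route as the paper: Le Cam's two-point bound with Gaussian noise, reduction to an antisymmetric pair via Lemma 7 of \cite{donoho1991geometrizing}, construction of a worst-case function, and the same optimization over $\epsilon$. Your ridge ansatz $f(x)=h^{*}(u^{T}x)$ with the norm ratio $\|u\|_{q^{*}}/\|u\|_{2}$ is just a clean repackaging of the paper's min--max problem \eqref{eq:optopt} (whose inner maximum is exactly the dual norm $\|\xi\|_{q^{*}}$), and it yields the identical values of $\omega(\epsilon)$ and hence the identical bounds.
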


Similar to the proof of Theorem~\ref{general thm}, Le Cam's method and the modulus of continuity are used to obtain the worst-case hypothesized functions for the general minimax risk in Theorem \ref{general multi thm}. However, here the choice of $q$ in function class $\mathcal{A}_{q}$ affects the modulus function, and thus the resulting worst-case functions and risk bounds. The detailed proof of Theorem \ref{general multi thm} is provided in Appendix \ref{missing}.

Figure~\ref{wcf} visualizes the above worst-case function $f^*$ for a two-dimensional case in function class $\mathcal{A}_{q}$ with different $q$. Similar to the discussion for Figure~\ref{single_wcf}, worst-case functions serve to balance the maximization of the gradient difference and the minimization of the function evaluation difference. Since $\ell_{1}$-norm is the largest among the three considered norms, the worst-case function for $q=1$ descends to 0 most rapidly. The worst-case function for $q=2$ takes a round shape, and the boundary of its zeros is circular. The worst-case function for $q=\infty$ decreases only when the value in the maximal dimension increases and therefore appears the sharpest. In addition, each dimension
of the worst-case function for $q=1$ or $q=2$ has the same derivative at point $x_{0}=0$. However, the worst-case function for $q=\infty$ has a non-zero derivative at point $x_{0}=0$ only along one of its dimensions.
\begin{figure}[htbp]
	\begin{subfigure}{0.33\textwidth}
	  \centering
	  \includegraphics[width=1\linewidth]{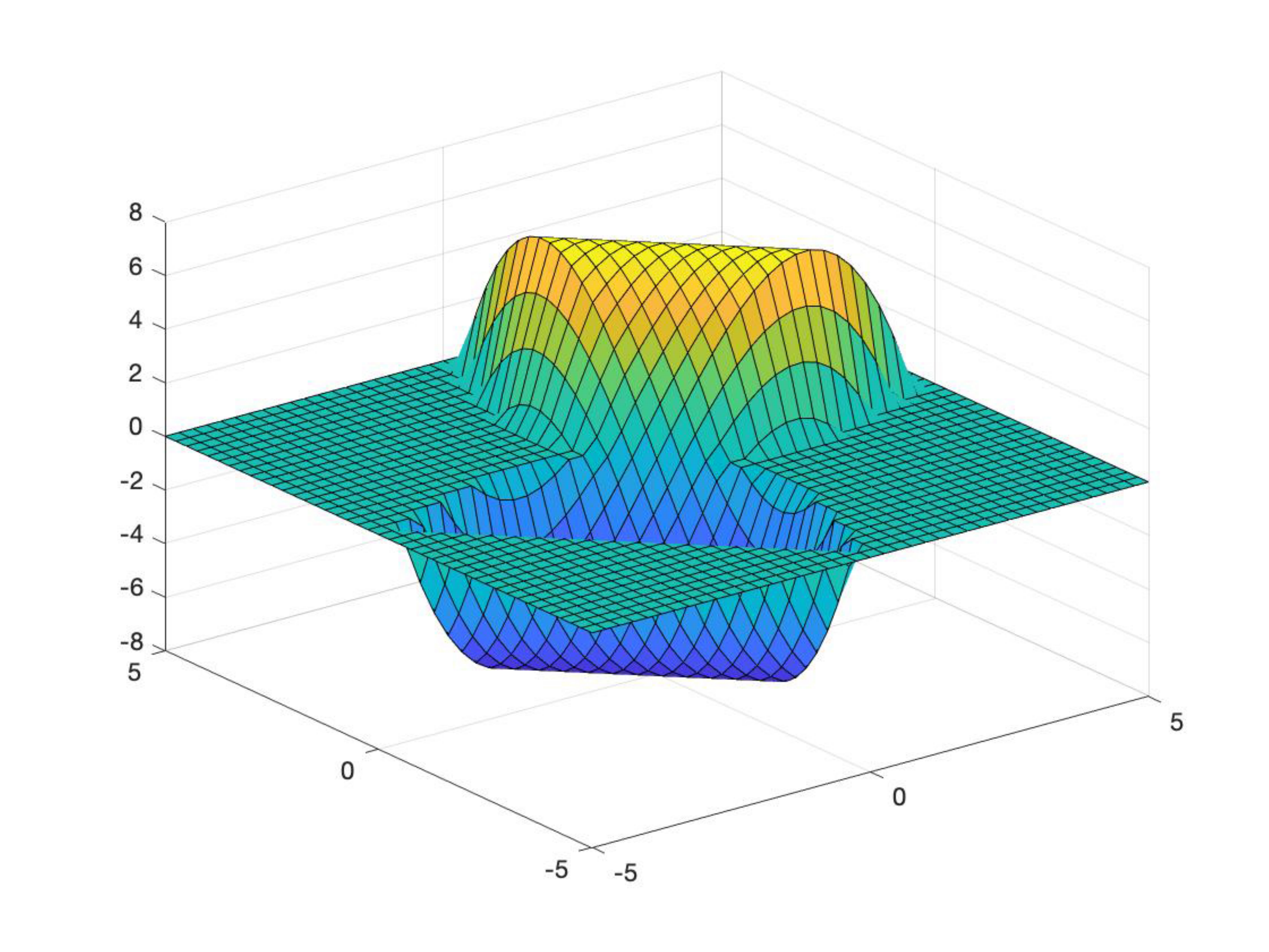}
	  \caption{$q=1$.}\label{function1}
	\end{subfigure}
    \begin{subfigure}{0.33\textwidth}
	  \centering
	  \includegraphics[width=1\linewidth]{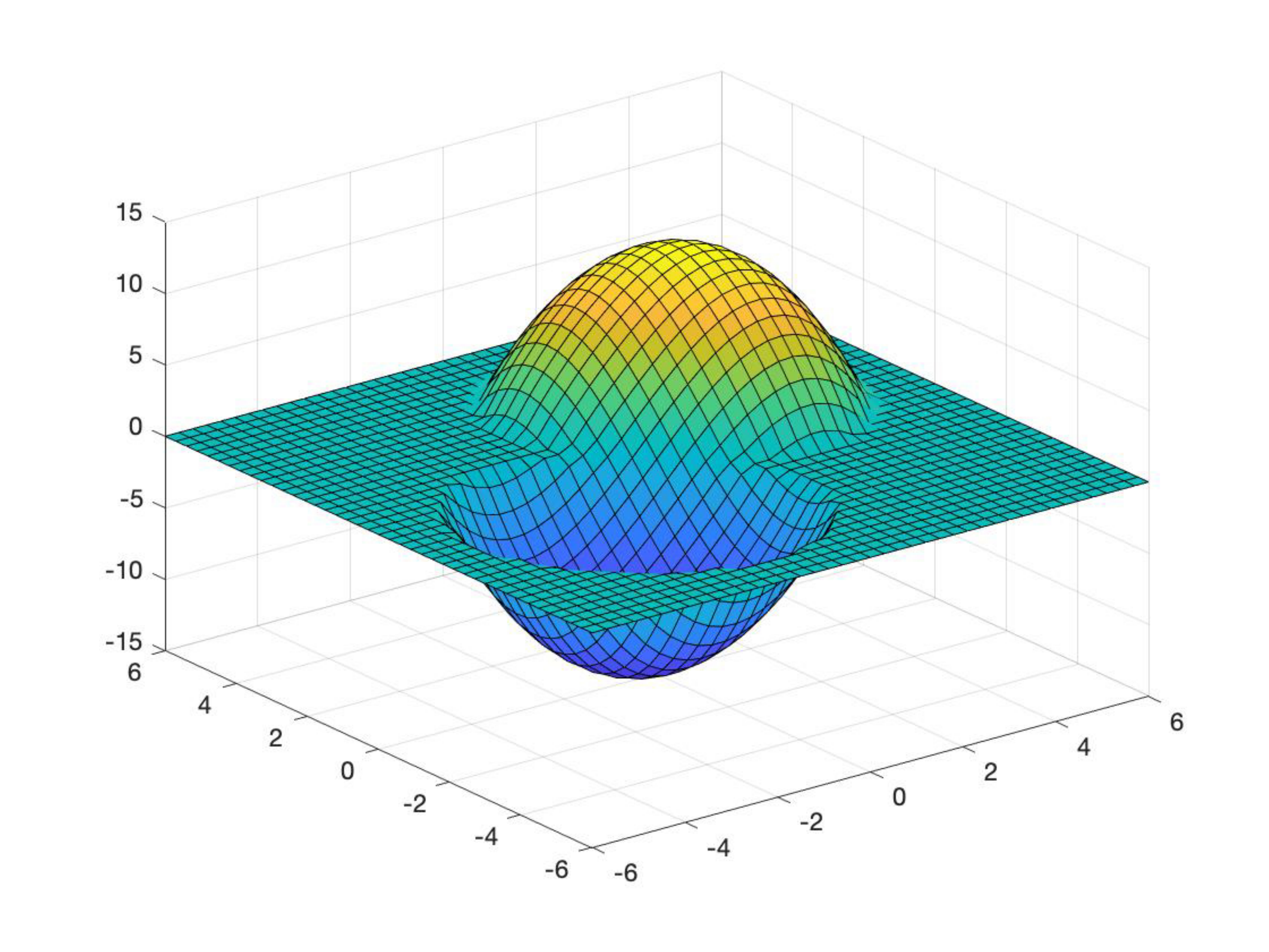}
	  \caption{$q=2$.}\label{function2}
	\end{subfigure}
    \begin{subfigure}{0.33\textwidth}
	  \centering
	  \includegraphics[width=1\linewidth]{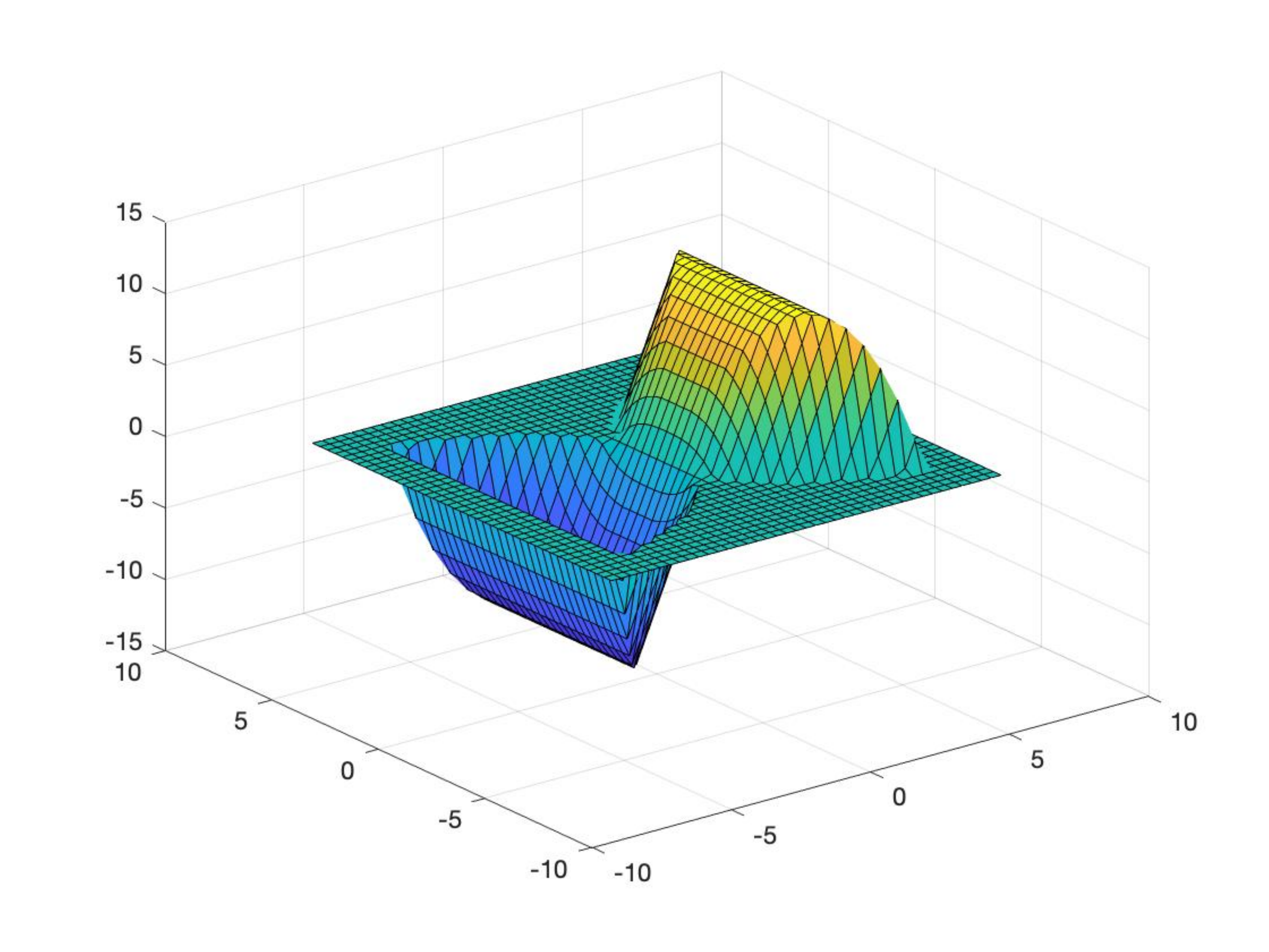}
	  \caption{$q=\infty$.}\label{function3}
	\end{subfigure}
	\caption{Worst-case functions in a two-dimensional case}
    \label{wcf}
\end{figure}

\section{Conclusion}\label{sec:conclusion}
In this paper we studied the minimax optimality of stochastic gradient estimators when only noisy function evaluations are available, with respect to the worst-case MSE among a collection of unknown twice differentiable functions. We derived the exact minimax risk for the class of linear estimators, and showed that CFD is optimal within this class, in the single-dimensional case. We extended the analysis to the multi-dimensional case, by showing the optimality of CFD up to a multiplicative factor sublinear in the dimension, and exactly if the signs of weights in the linear estimator are restricted to be the same across components. We also showed that, without further assumptions on the gradient magnitude, the worst-case risk of random perturbation schemes can be unbounded. Next we approximated the minimax risk over the general class of (nonlinear) estimators and showed that CFD is still nearly optimal over this much larger estimator class. These approximations were shown up to a constant factor in the single-dimensional case and an additional factor depending polynomially on the dimension in the multi-dimensional case. We used elementary techniques in the linear minimax analyses, and Le Cam's method and the modulus of continuity in the general minimax analyses. In future work, we will investigate the use of additional a priori information on the considered function class, and tighten our minimax estimates using potential alternate approaches.

\section*{Acknowledgements}
We gratefully acknowledge support from the National Science Foundation under grants CAREER CMMI-1653339/1834710 and IIS-1849280. A preliminary conference version of this work has appeared in \cite{lam2019minimax}.




\begin{footnotesize}

\bibliographystyle{abbrv}

\bibliography{bbl}

\begin{thebibliography}{10}

\bibitem{asmussen2007stochastic}
S.~Asmussen and P.~W. Glynn.
\newblock {\em Stochastic Simulation: Algorithms and Analysis}, volume~57.
\newblock Springer Science \& Business Media, New York, 2007.

\bibitem{dai2016optimal}
W.~Dai, T.~Tong, and M.~G. Genton.
\newblock Optimal estimation of derivatives in nonparametric regression.
\newblock {\em The Journal of Machine Learning Research}, 17(1):5700--5724,
  2016.

\bibitem{de2013derivative}
K.~De~Brabanter, J.~De~Brabanter, B.~De~Moor, and I.~Gijbels.
\newblock Derivative estimation with local polynomial fitting.
\newblock {\em The Journal of Machine Learning Research}, 14(1):281--301, 2013.

\bibitem{donoho1994statistical}
D.~L. Donoho.
\newblock Statistical estimation and optimal recovery.
\newblock {\em The Annals of Statistics}, 22(1):238--270, 1994.

\bibitem{donoho1991geometrizing}
D.~L. Donoho and R.~C. Liu.
\newblock Geometrizing rates of convergence, iii.
\newblock {\em The Annals of Statistics}, pages 668--701, 1991.

\bibitem{fan1993local}
J.~Fan.
\newblock Local linear regression smoothers and their minimax efficiencies.
\newblock {\em The Annals of Statistics}, 21(1):196--216, 1993.

\bibitem{fan1997local}
J.~Fan, T.~Gasser, I.~Gijbels, M.~Brockmann, and J.~Engel.
\newblock Local polynomial regression: Optimal kernels and asymptotic minimax
  efficiency.
\newblock {\em Annals of the Institute of Statistical Mathematics},
  49(1):79--99, 1997.

\bibitem{flaxman2005online}
A.~D. Flaxman, A.~T. Kalai, and H.~B. McMahan.
\newblock Online convex optimization in the bandit setting: Gradient descent
  without a gradient.
\newblock In {\em Proceedings of the sixteenth annual ACM-SIAM symposium on
  Discrete algorithms}, pages 385--394, Philadelphia, Pennsylvania, 2005.
  Society for Industrial and Applied Mathematics.

\bibitem{fox1989replication}
B.~L. Fox and P.~W. Glynn.
\newblock Replication schemes for limiting expectations.
\newblock {\em Probability in the Engineering and Informational Sciences},
  3(3):299--318, 1989.

\bibitem{frolov1963calculation}
A.~Frolov and N.~Chentsov.
\newblock On the calculation of definite integrals dependent on a parameter by
  the monte carlo method.
\newblock {\em USSR Computational Mathematics and Mathematical Physics},
  2(4):802--807, 1963.

\bibitem{fu2006gradient}
M.~C. Fu.
\newblock Gradient estimation.
\newblock {\em Handbooks in Operations Research and Management Science},
  13:575--616, 2006.

\bibitem{ghadimi2013stochastic}
S.~Ghadimi and G.~Lan.
\newblock Stochastic first-and zeroth-order methods for nonconvex stochastic
  programming.
\newblock {\em SIAM Journal on Optimization}, 23(4):2341--2368, 2013.

\bibitem{glasserman2013monte}
P.~Glasserman.
\newblock {\em Monte Carlo Methods in Financial Engineering}, volume~53.
\newblock Springer Science \& Business Media, New York, 2013.

\bibitem{glynn1990likelihood}
P.~W. Glynn.
\newblock Likelihood ratio gradient estimation for stochastic systems.
\newblock {\em Communications of the ACM}, 33(10):75--84, 1990.

\bibitem{heidelberger1988convergence}
P.~Heidelberger, X.-R. Cao, M.~A. Zazanis, and R.~Suri.
\newblock Convergence properties of infinitesimal perturbation analysis
  estimates.
\newblock {\em Management Science}, 34(11):1281--1302, 1988.

\bibitem{heidergott2010gradient}
B.~Heidergott, F.~J. V{\'a}zquez-Abad, G.~Pflug, and T.~Farenhorst-Yuan.
\newblock Gradient estimation for discrete-event systems by measure-valued
  differentiation.
\newblock {\em ACM Transactions on Modeling and Computer Simulation},
  20(1):5/1--5/28, 2010.

\bibitem{ho1983infinitesimal}
Y.-C. Ho, X.~Cao, and C.~Cassandras.
\newblock Infinitesimal and finite perturbation analysis for queueing networks.
\newblock {\em Automatica}, 19(4):439--445, 1983.

\bibitem{lam2019minimax}
H.~Lam and X.~Zhang.
\newblock Minimax efficient finite-difference gradient estimators.
\newblock In {\em 2019 Winter Simulation Conference (WSC)}, pages 392--403.
  IEEE, 2019.

\bibitem{l1991overview}
P.~L'Ecuyer.
\newblock An overview of derivative estimation.
\newblock In B.~Nelson, W.~D. Kelton, and G.~M. Clark, editors, {\em
  Proceedings of the 1991 Winter Simulation Conference}, pages 207--217,
  Piscataway, New Jersey, 1991. Institute of Electrical and Electronics
  Engineers, Inc.

\bibitem{nesterov2017random}
Y.~Nesterov and V.~Spokoiny.
\newblock Random gradient-free minimization of convex functions.
\newblock {\em Foundations of Computational Mathematics}, 17(2):527--566, 2017.

\bibitem{reiman1989sensitivity}
M.~I. Reiman and A.~Weiss.
\newblock Sensitivity analysis for simulations via likelihood ratios.
\newblock {\em Operations Research}, 37(5):830--844, 1989.

\bibitem{rubinstein1986score}
R.~Y. Rubinstein.
\newblock The score function approach for sensitivity analysis of computer
  simulation models.
\newblock {\em Mathematics and Computers in Simulation}, 28(5):351--379, 1986.

\bibitem{spall1992multivariate}
J.~C. Spall.
\newblock Multivariate stochastic approximation using a simultaneous
  perturbation gradient approximation.
\newblock {\em IEEE transactions on automatic control}, 37(3):332--341, 1992.

\bibitem{tsybakov2009}
A.~B. Tsybakov.
\newblock {\em Introduction to Nonparametric Estimation}.
\newblock Springer Science \& Business Media, New York, 2009.

\bibitem{wang2015derivative}
W.~W. Wang and L.~Lin.
\newblock Derivative estimation based on difference sequence via locally
  weighted least squares regression.
\newblock {\em The Journal of Machine Learning Research}, 16(1):2617--2641,
  2015.

\bibitem{zazanis1993convergence}
M.~A. Zazanis and R.~Suri.
\newblock Convergence rates of finite-difference sensitivity estimates for
  stochastic systems.
\newblock {\em Operations research}, 41(4):694--703, 1993.

\end{thebibliography}
\end{footnotesize}

\newpage

\appendix

\section*{Appendix}
\section{Useful Results}\label{appendlemma}

\begin{lemma}[Lemma 2.6 in
\cite{tsybakov2009}]\label{stdlemma}
For any two distributions $P$ and $Q$ having positive densities $p(\cdot)$ and $q(\cdot)$ on $\mathbb{R}^n$, it holds that
$$\int\min\{p,q\}\geq \frac{1}{2}e^{-KL(P,Q)},$$
where
$$KL(P,Q)=\int p\log \left(\frac{p}{q}\right).$$
\end{lemma}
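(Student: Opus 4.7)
The plan is to adapt the Le Cam two-point/modulus-of-continuity argument from the proof of Theorem~\ref{general thm} to the multi-dimensional setting. The main new content is the identification of the worst-case odd function under each norm $\ell_q$ for $q\in\{1,2,\infty\}$, which ultimately produces the stated $q$-dependent constants.

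First I would set up Le Cam's two-point reduction exactly as in the single-dimensional case. Taking $x_0=0$ without loss of generality and considering any two hypotheses $f_1,f_2\in\mathcal{A}_q$, the $\ell_2$-nearest-neighbor test statistic $\psi$ based on the $\mathbb R^p$-valued estimator $\widehat\theta$, combined with the two-point inequality
\[
\|\nabla f_1(0)-\nabla f_2(0)\|_2^2\leq 2\|\widehat\theta-\nabla f_1(0)\|_2^2+2\|\widehat\theta-\nabla f_2(0)\|_2^2
\]
and Lemma~\ref{stdlemma} applied to the Gaussian noise model with variance $b$, yield
\[
R_p(n,\mathcal{A}_q,\mathcal{B})\geq\frac{\epsilon^2}{16}\exp\!\left(-\frac{n}{2b}\sup_x|f_1(x)-f_2(x)|^2\right),
\]
where $\epsilon=\|\nabla f_1(0)-\nabla f_2(0)\|_2$. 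To tighten this bound I would introduce the multi-dimensional inverse modulus of continuity
\[
\omega_q(\epsilon)=\inf\!\left\{\sup_x|f_1(x)-f_2(x)|:\|\nabla f_1(0)-\nabla f_2(0)\|_2=\epsilon,\ f_1,f_2\in\mathcal{A}_q\right\},
\]
and invoke (an $\mathbb R^p$ analog of) Donoho's Lemma~7 to restrict to antisymmetric pairs $(f,-f)$ with $f$ odd, so that the problem reduces to minimizing $\sup_x|f(x)|$ over odd $f\in\mathcal{A}_q$ with $\|\nabla f(0)\|_2=\epsilon/2$.

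The core new computation is to solve this extremal problem for each $q$. For any odd $f\in\mathcal{A}_q$, the quadratic term in the defining inequality of $\mathcal{A}_q$ vanishes by antisymmetry, so with $g:=\nabla f(0)$ one gets the lower envelope $|f(x)|\geq|g^Tx|-(a/6)\|x\|_q^3$. The natural extremal candidate is
\[
f^*(x)=\mathrm{sign}(g^Tx)\bigl[|g^Tx|-(a/6)\|x\|_q^3\bigr]_+,
\]
which one verifies lies in $\mathcal{A}_q$ (the remainder inequality is tight on the support and trivially satisfied off the support). Parametrizing $x=tv$ and optimizing over $t>0$ for fixed direction $v$ gives $\sup_x|f^*(x)|=\tfrac{2}{3}\|g\|_{q^*}\sqrt{2\|g\|_{q^*}/a}$, where $q^*$ is the H\"older conjugate of $q$. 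Minimizing $\|g\|_{q^*}$ subject to $\|g\|_2=\epsilon/2$ yields the three cases: for $q=1$ (so $q^*=\infty$), the minimum $\|g\|_\infty=\epsilon/(2\sqrt{p})$ is attained at $g=\pm(\epsilon/(2\sqrt{p}))\mathbbm{1}$, giving $\omega_1(\epsilon)^2=4\epsilon^3/(9ap^{3/2})$; for $q=2$ ($q^*=2$) trivially $\|g\|_2=\epsilon/2$, giving $\omega_2(\epsilon)^2=4\epsilon^3/(9a)$; and for $q=\infty$ ($q^*=1$), the minimum $\|g\|_1=\epsilon/2$ is attained at $g=\pm(\epsilon/2)e_i$, giving $\omega_\infty(\epsilon)^2=4\epsilon^3/(9a)$. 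Note that the $p^{3/2}$ factor in the $q=1$ bound arises entirely from the $\|\cdot\|_2$-to-$\|\cdot\|_\infty$ gap on $\mathbb R^p$, while for $q=2,\infty$ the modulus matches the single-dimensional one.

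Substituting $\omega_q(\epsilon)^2$ into $(\epsilon^2/16)\exp(-n\omega_q(\epsilon)^2/(2b))$ and optimizing over $\epsilon>0$, exactly as in the end of the proof of Theorem~\ref{general thm}, produces the three stated lower bounds; the optimal $\epsilon$ satisfies $\epsilon^3=3abp^{3/2}/n$ for $q=1$ and $\epsilon^3=3ab/n$ for $q=2,\infty$, always giving the prefactor $e^{-2/3}/16$. The main obstacle I anticipate is rigorously confirming that $f^*$ actually minimizes $\sup_x|f|$ over all odd $f\in\mathcal{A}_q$ with prescribed $g=\nabla f(0)$: the lower envelope $|g^Tx|-(a/6)\|x\|_q^3$ is itself not odd and need not belong to $\mathcal{A}_q$, so one must argue separately (either by a direct pointwise comparison using the remainder inequality restricted to the ray through $x$, or by a convexity/symmetrization step parallel to the one used to reduce to odd functions) that no odd competitor in $\mathcal{A}_q$ has a strictly smaller sup-norm. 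A secondary technical point is checking that $f^*\in\mathcal{A}_q$ across the non-smooth boundary $|g^Tx|=(a/6)\|x\|_q^3$ of its support, where the quadratic-term vanishing and the remainder must be verified to hold uniformly.
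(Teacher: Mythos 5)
Your proposal does not address the statement in question. The statement is Lemma~\ref{stdlemma}, the elementary inequality $\int\min\{p,q\}\geq\frac{1}{2}e^{-KL(P,Q)}$ relating the testing affinity of two densities to their Kullback--Leibler divergence. What you have written is instead a proof sketch of Theorem~\ref{general multi thm} (the multi-dimensional general minimax lower bound): the Le Cam two-point reduction, the $\ell_q$-dependent modulus of continuity, the extremal odd functions $f^*$, and the optimization over $\epsilon$ all belong to that theorem, not to the lemma. Worse, your argument explicitly \emph{invokes} Lemma~\ref{stdlemma} as an ingredient (``...and Lemma~\ref{stdlemma} applied to the Gaussian noise model...''), so even read charitably it is circular as a proof of the lemma itself.

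A correct proof of the actual statement is short and purely measure-theoretic; no estimators, function classes, or moduli of continuity are involved. One standard route (the one in Tsybakov's book, which the paper cites rather than reproduces): by the Cauchy--Schwarz inequality,
\[
\left(\int\sqrt{pq}\right)^{2}=\left(\int\sqrt{\min\{p,q\}}\sqrt{\max\{p,q\}}\right)^{2}\leq\left(\int\min\{p,q\}\right)\left(\int\max\{p,q\}\right)\leq 2\int\min\{p,q\},
\]
since $\int\max\{p,q\}=2-\int\min\{p,q\}\leq 2$. Then by Jensen's inequality applied to the concave logarithm,
\[
\log\left(\int\sqrt{pq}\right)^{2}=2\log\int p\sqrt{q/p}\geq 2\int p\log\sqrt{q/p}=-\int p\log(p/q)=-KL(P,Q),
\]
so $\int\min\{p,q\}\geq\frac{1}{2}\left(\int\sqrt{pq}\right)^{2}\geq\frac{1}{2}e^{-KL(P,Q)}$. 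You should supply an argument of this kind for the lemma; the material you wrote, while broadly consistent with the paper's Appendix~\ref{missing} treatment of Theorem~\ref{general multi thm}, answers a different question.
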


\begin{lemma}[Lemma 7 in~\cite{donoho1991geometrizing}]\label{pairlemma}
Let functional $T$ be linear and function class $\mathcal{F}$ be convex and centrosymmetric about 0 (that is, $f\in\mathcal{F}$ implies that $-f\in\mathcal{F}$). Define the modules as
\begin{gather}\label{lemma2eq1}
\epsilon(\omega)=\sup\left\{|T(f_{1})-T(f_{2})|:||f_{1}-f_{2}||\leq\omega,f_{1},f_{2}\in\mathcal{F}\right\}.
\end{gather}
Then the modules satisfies
\begin{gather}\label{lemma2eq2}
\epsilon(\omega)=2\sup\left\{|T(f)|:||f||\leq\omega/2,f\in\mathcal{F}\right\}.
\end{gather}
Moreover, if a pair attaining the modules in~\eqref{lemma2eq1} exists, it can be taken to be of the form ($f,-f$) where $f$ is a solution to~\eqref{lemma2eq2}.
\end{lemma}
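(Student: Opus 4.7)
The plan is to establish the identity by two one-sided inequalities, both of which rely essentially on the combination of linearity of $T$ with the structural assumptions (convexity plus centrosymmetry) on $\mathcal{F}$. The key observation is that for any $f_1,f_2\in\mathcal{F}$, centrosymmetry gives $-f_2\in\mathcal{F}$, and then convexity gives $(f_1-f_2)/2=\tfrac{1}{2}f_1+\tfrac{1}{2}(-f_2)\in\mathcal{F}$, while linearity of $T$ gives $T(f_1)-T(f_2)=2T((f_1-f_2)/2)$. This bridges the two-point formulation and the one-point formulation.

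For the direction $\epsilon(\omega)\le 2\sup\{|T(f)|:\|f\|\le\omega/2,\ f\in\mathcal{F}\}$, I would take an arbitrary admissible pair $(f_1,f_2)$ in \eqref{lemma2eq1}, set $f=(f_1-f_2)/2$, and note that $f\in\mathcal{F}$ (by the centrosymmetry/convexity argument above), $\|f\|\le\omega/2$, and $|T(f_1)-T(f_2)|=2|T(f)|$. Taking the supremum over $(f_1,f_2)$ on the left and recognizing that $f$ is feasible for the one-point problem yields the bound. For the reverse direction, given any feasible $f$ in \eqref{lemma2eq2}, the pair $(f,-f)$ is admissible in \eqref{lemma2eq1} since $-f\in\mathcal{F}$ by centrosymmetry and $\|f-(-f)\|=2\|f\|\le\omega$, and by linearity $|T(f)-T(-f)|=2|T(f)|$; taking the supremum over $f$ gives the inequality. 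Combining the two directions yields \eqref{lemma2eq2}.

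For the attainability claim, I would reuse the construction above. If $(f_1,f_2)$ attains the supremum in \eqref{lemma2eq1}, then $f^\star:=(f_1-f_2)/2\in\mathcal{F}$ with $\|f^\star\|\le\omega/2$ and $2|T(f^\star)|=|T(f_1)-T(f_2)|=\epsilon(\omega)$, so $f^\star$ attains the supremum in \eqref{lemma2eq2}. Moreover, the symmetric pair $(f^\star,-f^\star)$ is admissible in \eqref{lemma2eq1} and satisfies $|T(f^\star)-T(-f^\star)|=2|T(f^\star)|=\epsilon(\omega)$, so it is also an attaining pair, which is exactly the form asserted.

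The proof is entirely elementary and I do not expect a genuine obstacle; the only subtlety is being careful to invoke centrosymmetry to justify $-f_2\in\mathcal{F}$ before using convexity to form the midpoint $(f_1-f_2)/2$, since without centrosymmetry the midpoint of $f_1$ and $-f_2$ would not obviously lie in $\mathcal{F}$. Everything else (the norm bound $\|(f_1-f_2)/2\|\le\omega/2$ and the identity $T(f_1)-T(f_2)=2T((f_1-f_2)/2)$) is immediate from the triangle inequality and linearity.
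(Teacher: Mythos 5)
Your proof is correct and complete: the midpoint argument (centrosymmetry gives $-f_2\in\mathcal{F}$, convexity gives $(f_1-f_2)/2\in\mathcal{F}$, linearity converts the two-point difference into $2|T(f)|$) together with the reverse substitution $(f,-f)$ is exactly the standard argument behind Lemma 7 of Donoho and Liu, which the paper itself does not reprove but simply imports by citation. Nothing is missing, and your handling of the attainability claim via $f^\star=(f_1-f_2)/2$ is the right one.
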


We note that \cite{donoho1991geometrizing} focuses on the $L_2$-modulus, which uses the $L_2$-norm to define $\|\cdot\|$ in Lemma \ref{pairlemma}. However, the proof of this lemma works for any norm $\|\cdot\|$, including the $L_\infty$-norm needed in our case.

\section{Missing Proofs}\label{missing}
\begin{proof}[Proof of Theorem \ref{mt}]
We prove the three assertions of the theorem one by one.

  \noindent\emph{Proof of Assertion 1.} Consider any designs $\delta_j,j=1,\cdots,n$, any linear coefficients $w_j,j=1,\cdots,n$, and any function $f(\cdot)\in\mathcal{A}_q$. If $f(\cdot)\in C^{3}(\mathbb{R}^{p})$, we have, by Taylor's expansion
$$f(x_{0}+\delta_{j})=f(x_{0})+\nabla f(x_{0})^{T}\delta_{j}+\frac{1}{2}\delta_{j}^{T}\nabla^{2}f(x_{0})\delta_{j}+\frac{1}{6}\sum_{k_1,k_2,k_3}\left(\nabla^{3}f(x_{0}+t_{j}\delta_{j})\right)_{k_1k_2k_3}(\delta_{j})_{k_1}(\delta_{j})_{k_2}(\delta_{j})_{k_3},$$
for any $j=1,\cdots,n$, where $0\leq t_{j}\leq 1$. Thus under the additional $C^3$ assumption (which we will verify if necessary) the bias of the estimator $L^{p}_{n}$ satisfies
\begin{eqnarray}
&&E(L^{p}_{n})_{i}-(\nabla f(x_{0}))_{i}=f(x_{0})\sum_{j=1}^{n}(w_{j})_{i}+\nabla f(x_{0})^{T}\left(\sum_{j=1}^{n}(w_{j})_{i}\delta_{j}-e_{i}\right)+\sum_{j=1}^{n}\frac{1}{2}(w_{j})_{i}\delta_{j}^{T}\nabla^{2}f(x_{0})\delta_{j}\notag\\
&&+\sum_{j=1}^{n}\frac{1}{6}(w_{j})_{i}\sum_{k_1,k_2,k_3}\left(\nabla^{3}f(x_{0}+t_{j}\delta_{j})\right)_{k_1k_2k_3}(\delta_{j})_{k_1}(\delta_{j})_{k_2}(\delta_{j})_{k_3},\label{biaslinearmulti}
\end{eqnarray}
where $e_{i}$ is the $i$th standard basis in $\mathbb{R}^{p}$. On the other hand, the variance of the estimator $L^{p}_{n}$ is
$$E\|L^{p}_{n}-EL^{p}_{n}\|_{2}^{2}=\sum_{i=1}^{p}Var((L^{p}_{n})_{i})=\sum_{i=1}^{p}\left(\sum_{j=1}^{n}(w_{j})_{i}^{2}\sigma^{2}(x_{0}+\delta_{j})\right),$$
regardless of whether $f(\cdot)\in C^3(\mathbb R^p)$. If $\sum_{j=1}^{n}(w_{j})_{i}\neq0$, we consider the particular cases where $f(\cdot)$ is a constant function, i.e., $f(x)=f(x_0)$ for all $x\in\mathbb{R}^p$. Note that constant functions are in $\mathcal{A}_{q}$ and also in $C^3(\mathbb{R}^p)$. Also note that $(\nabla f(x))_{k_1}=0, (\nabla^{2}f(x))_{k_1k_2}=0, (\nabla^{3} f(x))_{k_1k_2k_3}=0$ for all $x$ and $k_1,k_2,k_3$, therefore from \eqref{biaslinearmulti} we conclude that
$$\sup_{f(\cdot)\in \mathcal{A}_{q}}\left(E(L^{p}_{n})_{i}-(\nabla f(x_{0}))_{i}\right)^{2}\geq\sup_{f(x_0)\in\mathbb{R}}f(x_0)^2\left(\sum_{j=1}^n(w_j)_i\right)^2=\infty.$$
Thus, like in the proof for Theorem \ref{st}, for the purpose of deriving a lower bound for $R_{p}(n,\mathcal{A}_{q},\mathcal{B})$, we can assume without loss of generality that $\sum_{j=1}^{n}w_{j}=0$. Similarly we can assume $\sum_{j=1}^{n}(w_{j})_{i}\delta_{j}-e_{i}=0$. Furthermore, if $\delta_{i}=\delta_{j}$, we assume w.l.o.g. $w_{i}=w_{j}$ since it leads to smaller variance. Now consider $f(\cdot)\in\mathcal{A}_{q}$ such that $f(x_{0}+\delta_{j})=\frac{a}{6}\|\delta_{j}\|_{q}^{3}\cdot sign((w_{j})_{i_{0}})$, and $f(x)=0$ otherwise, where
$$i_{0}=\arg\max_{1\leq i\leq p}\sum_{j=1}^{n}|(w_{j})_{i}|\|\delta_{j}\|^{3}_{q}.$$
In such a case the MSE is bounded from below by
$$\left(\sum_{j=1}^{n}\frac{a}{6}|(w_{j})_{i_{0}}|\|\delta_{j}\|_{q}^{3}\right)^{2}+\sum_{i=1}^{p}\left(\sum_{j=1}^{n}(w_{j})_{i}^{2}\sigma^{2}(x_{0}+\delta_{j})\right).$$
Further considering the case $\sigma^{2}(x_{0}+\delta_{j})=b$, we get
\begin{equation}\label{rep}
\sup_{\substack{f(\cdot)\in\mathcal{A}_q\\\sigma^{2}(\cdot)\in\mathcal{B}}}E\|L^{p}_{n}-\nabla f(x_{0})\|_{2}^{2}\geq\frac{a^{2}}{36}\left(\sum_{j=1}^{n}|(w_{j})_{i_{0}}|\|\delta_{j}\|_{q}^{3}\right)^{2}+b\sum_{j=1}^{n}\|w_{j}\|_{2}^{2}.
\end{equation}
Now since $\sum_{j=1}^{n}(w_{j})_{i}(\delta_{j})_{i}=1$, by H\"older's inequality,
$$p=\sum_{j=1}^{n}w_{j}^{T}\delta_{j}\leq\sum_{j=1}^{n}\|w_{j}\|_{r}^{1/3}\|\delta_{j}\|_{q}\|w_{j}\|_{r}^{2/3}\leq\left(\sum_{j=1}^{n}\|w_{j}\|_{r}\|\delta_{j}\|_{q}^{3}\right)^{1/3}\left(\sum_{j=1}^{n}\|w_{j}\|_{r}\right)^{2/3},$$
where $\frac{1}{q}+\frac{1}{r}=1,q\geq1,r\geq1$.
Thus
\begin{equation}\label{multiineqtotal}
\frac{p^{6}}{\left(\sum_{j=1}^{n}\|w_{j}\|_{r}\right)^{4}}\leq\left(\sum_{j=1}^{n}\|w_{j}\|_{r}\|\delta_{j}\|_{q}^{3}\right)^{2}\leq\left(\sum_{j=1}^{n}\|w_{j}\|_{1}\|\delta_{j}\|_{q}^{3}\right)^{2}\leq p^{2}\left(\sum_{j=1}^{n}|(w_{j})_{i_{0}}|\|\delta_{j}\|_{q}^{3}\right)^{2},
\end{equation}
where the second inequality is due to $r\geq 1$ and the last inequality comes from the definition of $i_{0}$.
Moreover, by H\"older's inequality we have
\begin{equation}\label{multiineqrgeq2}
\left(\sum_{j=1}^{n}\|w_{j}\|_{r}\right)^{2}\leq\left(\sum_{j=1}^{n}\|w_{j}\|_{2}\right)^{2}\leq n\left(\sum_{j=1}^{n}\|w_{j}\|_{2}^{2}\right),\ r\geq2,
\end{equation}
\begin{equation}\label{multiineqr1}
\left(\sum_{j=1}^{n}\|w_{j}\|_{r}\right)^{2}\leq p\left(\sum_{j=1}^{n}\|w_{j}\|_{2}\right)^{2}\leq np\left(\sum_{j=1}^{n}\|w_{j}\|_{2}^{2}\right),\ r=1.
\end{equation}
Therefore, from \eqref{rep}, \eqref{multiineqtotal} and \eqref{multiineqrgeq2} we have
\begin{equation}\label{change}
\sup_{\substack{f(\cdot)\in\mathcal{A}_q\\\sigma^{2}(\cdot)\in\mathcal{B}}}E\|L^{p}_{n}-\nabla f(x_{0})\|_{2}^{2}\geq\frac{a^{2}}{36}\frac{p^{4}}{n^{2}\left(\sum_{j=1}^{n}\|w_{j}\|_{2}^{2}\right)^{2}}+b\sum_{j=1}^{n}\|w_{j}\|_{2}^{2}\geq p^{4/3}\left(\frac{3a^{2}b^{2}}{16}\right)^{1/3}n^{-2/3},\ q=1,2
\end{equation}
where the second inequality is obtained by optimizing over $\sum_{j=1}^n\|w_j\|_2^2$, and its corresponding equality is achieved at
$$\sum_{j=1}^{n}\|w_{j}\|_{2}^{2}=\left(\frac{a^{2}}{b}\right)^{1/3}\frac{p^{4/3}}{18^{1/3}n^{2/3}}.$$
Similarly, from \eqref{rep}, \eqref{multiineqtotal} and \eqref{multiineqr1} we have
\begin{equation}\label{change2}
\sup_{\substack{f(\cdot)\in\mathcal{A}_q\\\sigma^{2}(\cdot)\in\mathcal{B}}}E\|L^{p}_{n}-\nabla f(x_{0})\|_{2}^{2}\geq\frac{a^{2}}{36}\frac{p^{2}}{n^{2}\left(\sum_{j=1}^{n}\|w_{j}\|_{2}^{2}\right)^{2}}+b\sum_{j=1}^{n}\|w_{j}\|_{2}^{2}\geq p^{2/3}\left(\frac{3a^{2}b^{2}}{16}\right)^{1/3}n^{-2/3},\ q=\infty
\end{equation}
where the second inequality is obtained by optimizing over $\sum_{j=1}^n\|w_j\|_2^2$, and its corresponding equality is achieved at
$$\sum_{j=1}^{n}\|w_{j}\|_{2}^{2}=\left(\frac{a^{2}}{b}\right)^{1/3}\frac{p^{2/3}}{18^{1/3}n^{2/3}}.$$
Since $\delta_{j},w_{j}$ are arbitrary, we conclude from \eqref{change} and \eqref{change2} that
$$\inf_{\substack{\delta_{j},j=1,\cdots,n\\w_{j},j=1,\cdots,n}}\sup_{\substack{f(\cdot)\in\mathcal{A}_q\\\sigma^{2}(\cdot)\in\mathcal{B}}}E\|L^{p}_{n}-\nabla f(x_{0})\|_{2}^{2}\geq p^{4/3}\left(\frac{3a^{2}b^{2}}{16}\right)^{1/3}n^{-2/3},\ q=1,2,$$
$$\inf_{\substack{\delta_{j},j=1,\cdots,n\\w_{j},j=1,\cdots,n}}\sup_{\substack{f(\cdot)\in\mathcal{A}_q\\\sigma^{2}(\cdot)\in\mathcal{B}}}E\|L^{p}_{n}-\nabla f(x_{0})\|_{2}^{2}\geq p^{2/3}\left(\frac{3a^{2}b^{2}}{16}\right)^{1/3}n^{-2/3},\ q=\infty.$$
This completes the first assertion of the theorem.
\\

\noindent\emph{Proof of Assertion 2.} Suppose the budget $n$ is a multiple of $2p$. We allocate $n/p$ budget to each dimension. Then for any $f(\cdot)\in\mathcal{A}_{q}$ and $\sigma^{2}(\cdot)\in\mathcal{B}$, from \eqref{ub} in the proof of Theorem~\ref{st} we get
$$E\left((\bar{L}^{p}_{n})_{i}-(\nabla f(x_{0}))_{i}\right)^{2}\leq\left(\frac{3a^{2}b^{2}}{16}\right)^{1/3}\left(\frac{n}{p}\right)^{-2/3}=p^{2/3}\left(\frac{3a^{2}b^{2}}{16}\right)^{1/3}n^{-2/3}.$$
Thus
$$E\|\bar{L}_{n}^{p}-\nabla f(x_{0})\|^{2}_{2}\leq p^{5/3}\left(\frac{3a^{2}b^{2}}{16}\right)^{1/3}n^{-2/3}.$$

\noindent\emph{Proof of Assertion 3.} If each $w_{j}$ has the same sign across components, instead of~(\ref{rep}), we have the sharper lower bound
$$\frac{a^{2}}{36}\sum_{i=1}^{p}\left(\sum_{j=1}^{n}|(w_{j})_{i}|\|\delta_{j}\|_{q}^{3}\right)^{2}+b\sum_{j=1}^{n}\|w_{j}\|_{2}^{2},$$
and note that we also have H\"older's inequality
$$\left(\sum_{j=1}^{n}\|w_{j}\|_{1}\|\delta_{j}\|_{q}^{3}\right)^{2}\leq p\sum_{i=1}^{p}\left(\sum_{j=1}^{n}|(w_{j})_{i}|\|\delta_{j}\|_{q}^{3}\right)^{2}.$$ Thus the $p^{4/3}$ factor in~(\ref{change}) can be improved to $p^{5/3}$, and the $p^{2/3}$ factor in~(\ref{change2}) can be improved to $p$.
Since each linear coefficient $w_j$ in the estimator $\bar{L}_n^p$ has the same sign across components, combining with \eqref{me} we conclude that $\bar L_n^p$ is exactly optimal for the restricted class of linear estimators, when considering function class $\mathcal{A}_{q}$ with $q=1,2$.
\end{proof}

\begin{proof}[Proof of Theorem \ref{general multi thm}]
Like in the proof of Theorem \ref{general thm}, for convenience, we set $x_0=0$ w.l.o.g. in this proof, so that
\begin{equation}\label{classgeneralmulti}
\mathcal{A}_{q}=\left\{f(\cdot):\nabla^{2} f(0)\textrm{ exists and }\left|f(x)-f(0)-\nabla f(0)^{T}x-\frac{1}{2}x^{T}\nabla^{2}f(0)x\right|\leq \frac{a}{6}\|x\|^{3}_{q}\right\}
\end{equation}
Following Le Cam's method in the proof of Theorem~\ref{general thm} and replacing $|f_{1}^{'}(0)-f_{2}^{'}(0))|=\epsilon$ by $\left\|\nabla f_{1}(0)-\nabla f_{2}(0)\right\|_{2}=\epsilon$ for the multi-dimensional setting, we have (see \eqref{stdrelation} and \eqref{klexpr} in the proof of Theorem 4), for any $f_{1},f_{2}\in\mathcal{A}_{q}$,
\begin{equation}\label{stdrelationmulti}
R_{p}(n,\mathcal{A}_{q},\mathcal{B})\geq\frac{\left\|\nabla f_{1}(0)-\nabla f_{2}(0)\right\|_{2}^{2}}{16}e^{-\dfrac{\left\|\mu_{2}-\mu_{1}\right\|_{2}^{2}}{2b}},
\end{equation}
where $\mu_{k}=(f_{k}(x_{1}),\ldots,f_{k}(x_{n})),k=1,2$.

In order to maximize the lower bound, let us define the modulus function~\cite{donoho1994statistical}
for function class $\mathcal{A}_{q}$:
\begin{gather*}
\epsilon_{\mathcal{A}_{q}}(\omega)=\sup\left\{\|\nabla f_{1}(0)-\nabla f_{2}(0)\|_{2}: \underset{x}{\sup}\left|f_{1}(x)-f_{2}(x)\right|\leq\omega,\ f_{1},f_{2}\in\mathcal{A}_{q}\right\},
\end{gather*}
which is not only a function of $\omega$ (like that in Theorem~\ref{general thm}) but also affected by the choice of $q$. Here the extremal pair ($f_{1},f_{2}$) attaining the modulus function will be different for different $q$. First, by Lemma 7 of~\cite{donoho1991geometrizing} (which we include in Appendix~\ref{appendlemma} for self-contained purpose), the extremal pair can be chosen of the form: $f_{1}=f$ and $f_{2}=-f$. Thus
\begin{gather*}
\epsilon_{\mathcal{A}_{q}}(\omega)=2\sup\left\{\|\nabla f(0)\|_{2}: \underset{x}{\sup}\left|f(x)\right|\leq\omega/2,\ f\in\mathcal{A}_{q}\right\}.
\end{gather*}
It follows that $\epsilon_{\mathcal{A}_{q}}$ is the inverse function of
\begin{gather}
\omega_{\mathcal{A}_{q}}(\epsilon)=2\inf\left\{\underset{x}{\sup}\left|f\right|:\|\nabla f(0)\|_{2}=\epsilon/2,\ f\in\mathcal{A}_{q}\right\}.\label{problem}
\end{gather}
If $f(x)$ solves problem~(\ref{problem}), so does $-f(-x)$. As the norm is a convex function, $(f(x)-f(-x))/2$ is then also a solution. Therefore, we can restrict attention to odd functions in our search for a solution to~(\ref{problem}).

For each odd function $f\in\mathcal{A}_{q}$,
$$\left|f(x)-\nabla f(0)^{T}x\right|=\left|f(x)-f(0)-\nabla f(0)^{T}x-\frac{1}{2}x^{T}\nabla^{2}f(0)x\right|\leq\frac{a}{6}\|x\|^{3}_{q}.$$
It follows that
$$\left|f(x)\right|\geq \left|\nabla f(0)^{T}x\right|-\left|f(x)-\nabla f(0)^{T}x\right|\geq\left|\nabla f(0)^{T}x\right|-\frac{a}{6}\|x\|^{3}_{q}.$$
Therefore
$$\sup_x |f(x)|\geq \sup_x \left[|\nabla f(0)^T x|-\frac{a}{6}\|x\|_q^3\right]_+.$$
Denote $X_+ =\{ x:\nabla f(0)^Tx\geq0\}$ and $X_- = \{x:\nabla f(0)^Tx<0\}$. We note that the odd function
$$g(x)  = \left[ \nabla f(0)^T x-\frac{a}{6}\|x\|_q^3\right]_+\cdot1_{x\in X_+}  + \left[ \nabla f(0)^T x+\frac{a}{6}\|x\|_q^3\right]_+\cdot1_{x\in X_-} $$
belongs to $\mathcal{A}_q$, $\sup_x|f(x)|\geq\sup_x|g(x)|$, and also that $\nabla g(0)=\nabla  f(0)$. Therefore, we consider functions of the form
$$f(x)  = \left[ \xi^T x-\frac{a}{6}\|x\|_q^3\right]_+\cdot1_{\xi^T x\geq0}  + \left[ \xi^T x+\frac{a}{6}\|x\|_q^3\right]_+\cdot1_{\xi^T x<0} $$
with $\|\xi\|_2 = \|\nabla f(0)\|_2=\epsilon/2$ in searching for a solution to~(\ref{problem}). Moreover, if $\xi^{*}$ is a solution to the following problem
\begin{equation}\label{eq:optopt}
\min_{\|\xi\|_2=\epsilon/2}\max_{\|x\|_q=1,\xi^Tx\geq0} \xi^Tx.
\end{equation}
then we have that
$$f^{*}(x) = \left[ (\xi^{*})^T x-\frac{a}{6}\|x\|_q^3\right]_+\cdot1_{(\xi^{*})^T x\geq0}  + \left[ (\xi^{*})^T x+\frac{a}{6}\|x\|_q^3\right]_+\cdot1_{(\xi^{*})^T x<0} $$
is a solution to~(\ref{problem}).

For $q=1$ and $2$, it is easy to verify that
$$\xi^{*} = \left(\frac{\epsilon}{2\sqrt{p}},\ldots,\frac{\epsilon}{2\sqrt{p}}\right)$$
is a solution to~(\ref{eq:optopt}). Therefore,
\begin{gather*}
f^{*}(x)=sign\left(\sum_{i=1}^{p}(x)_{i}\right)\left[\frac{\epsilon}{2\sqrt{p}}\left|\sum_{i=1}^{p}(x)_{i}\right|-\frac{a}{6}\|x\|_{q}^{3}\right]_{+}.
\end{gather*}
Moreover, we have
\begin{gather*}
\sup_x|f^{*}(x)| = \sup_{t\geq0}\left[\frac{\epsilon}{2}\sqrt{p}t-\frac{a}{6}p^{3}t^3\right]_+ =\frac{\epsilon}{3}\sqrt{\frac{\epsilon}{a}}p^{-3/4}, \text{\ for\ } q=1,\\
\sup_x|f^{*}(x)| = \sup_{t\geq0}\left[\frac{\epsilon}{2}\sqrt{p}t-\frac{a}{6}p^{3/2}t^3\right]_+ =\frac{\epsilon}{3}\sqrt{\frac{\epsilon}{a}}, \text{\ for\ } q=2.
\end{gather*}


For $q=\infty$, it is easy to verify that
$$\xi^{*}=\left(\frac{\epsilon}{2},0,\ldots,0\right)$$ is a solution to~(\ref{eq:optopt}). Therefore,
\begin{gather*}
f^{*}(x)=sign\left((x)_1\right)\left[\frac{\epsilon}{2}\left|(x)_{1}\right|-\frac{a}{6}\|x\|_{q}^{3}\right]_{+}.
\end{gather*}
Moreover, we have
$$\sup_x|f^{*}(x)| = \sup_{t\geq0}\left[\frac{\epsilon}{2}t-\frac{a}{6}t^3\right]_+ =\frac{\epsilon}{3}\sqrt{\frac{\epsilon}{a}}.$$
Thus
\begin{gather*}
\omega_{\mathcal{A}_{q}}(\epsilon)=2\sup_x|f^{*}(x)|=
\frac{2\epsilon}{3}\sqrt{\frac{\epsilon}{a}}p^{-3/4}, \text{\ for\ } q=1,\\
\omega_{\mathcal{A}_{q}}(\epsilon)=2\sup_x|f^{*}(x)|=\frac{2\epsilon}{3}\sqrt{\frac{\epsilon}{a}}, \text{\ for\ } q=2,\infty.
\end{gather*}
Therefore, from \eqref{stdrelationmulti} we have
\begin{eqnarray*}
R_{p}(n,\mathcal{A}_{q},\mathcal{B})\geq\frac{\epsilon^{2}}{16}e^{-\dfrac{n\omega_{\mathcal{A}_{q}}^{2}(\epsilon)}{2b}}=\frac{\epsilon^{2}}{16}e^{-\dfrac{2n\epsilon^{3}}{9abp^{3/2}}}, \text{\ for\ } q=1,\\
R_{p}(n,\mathcal{A}_{q},\mathcal{B})\geq\frac{\epsilon^{2}}{16}e^{-\dfrac{n\omega_{\mathcal{A}_{q}}^{2}(\epsilon)}{2b}}=\frac{\epsilon^{2}}{16}e^{-\dfrac{2n\epsilon^{3}}{9ab}}, \text{\ for\ } q=2,\infty.
\end{eqnarray*}
Further by choosing
$$\epsilon=\left(\dfrac{3abp^{3/2}}{n}\right)^{1/3}, \text{\ for\ } q=1,$$
$$\epsilon=\left(\dfrac{3ab}{n}\right)^{1/3}, \text{\ for\ } q=2,\infty,$$
at which $\frac{\epsilon^{2}}{16}e^{-\frac{2n\epsilon^{3}}{9abp^{3/2}}}$ and $\frac{\epsilon^{2}}{16}e^{-\frac{2n\epsilon^{3}}{9ab}}$ achieve their minima respectively,
we get
$$R_{p}(n,\mathcal{A}_{q},\mathcal{B})\geq\frac{1}{16}e^{-2/3}\left(\dfrac{3abp^{3/2}}{n}\right)^{2/3}\approx 0.0667p\left(\frac{ab}{n}\right)^{2/3}, \text{\ for\ } q=1,$$
$$R_{p}(n,\mathcal{A}_{q},\mathcal{B})\geq\frac{1}{16}e^{-2/3}\left(\dfrac{3ab}{n}\right)^{2/3}\approx 0.0667\left(\frac{ab}{n}\right)^{2/3}, \text{\ for\ } q=2,\infty.$$
\end{proof}

\end{document}